\theoremstyle{plain}
\theoremstyle{definition}
\newtheorem{theorem}{Theorem}[section]
\newtheorem{lemma}[theorem]{Lemma}
\newtheorem{corollary}[theorem]{Corollary}
\newtheorem{definition}[theorem]{Definition}
\newtheorem{example}[theorem]{Example}
\newtheorem{note}[theorem]{Note}
\theoremstyle{remark}
\numberwithin{equation}{section}
\newcommand{\SP}{\: \: \: \: \:}
\title{On Li--Yorke chaotic transformation groups modulo an ideal}
\author[M. Pourattar, F. Ayatollah Zadeh Shirazi]{Mehrnaz Pourattar, Fatemah Ayatollah Zadeh Shirazi}
\begin{document}
\begin{abstract}
In the following text we introduce the notion of chaoticity
modulo an ideal in the sense of Li--Yorke
in topological transformation semigroups and bring some of its elementary properties.
We continue our study by characterizing a class of abelian
infinite Li--Yorke chaotic Fort transformation groups and
show all of the elements of the above class is co--decomposable
to non--Li--Yorke chaotic transformation groups.
\end{abstract}
\maketitle
\noindent {\small {\bf 2010 Mathematics Subject Classification:}  54H15, 54H20 \\
{\bf Keywords:}} Fort space, Ideal, Li--Yorke chaos, Transformation semigroup.
\section{Introduction}
\noindent Different senses of chaos in dynamical systems like Devaney chaos~\cite{d1, dev, devaney}, Li--Yorke chaos~\cite{li-yorke},
distributional chaos~\cite{distributional}, $\omega$--chaos~\cite{w}, e--chaos~\cite{e}, ... for dynamical systems have been studied
in several texts, the main emphasis in these researches are on (compact) metric dynamical systems. Moreover,
recently have been done researches on chaos in transformation groups~\cite{trd}, maps on transformation groups~\cite{ctd}
and uniform phase spaces~\cite{ud}. On the other hand different compactifications (and amongst them one--point--compactification)
have their significant role in point set topology and topological dynamics~\cite{1, 2, 3}. In this text we present
a definition for Li--Yorke chaos in transformation semigroups (modulo an ideal)
with infinite phase semigroup and study this concept in the category of
transformation groups with one--point--compactification of a discrete space (i.e., a Fort space) as phase space.
\section{Preliminaries}
\noindent As it has been mentioned in Introduction in this text we deal with
Li--Yorke chaos in transformation semigroups with a uniform space as phase space, so we need backgrounds on
transformation semigroups, uniform spaces and Li--Yorke chaos, also we bring backgrounds on Fort spaces
too regarding our examples.
\subsection{Background on uniform spaces}\label{this}
Suppose $\mathcal F$ is a collection of subsets of $X\times X$ such that:
\begin{itemize}
\item $\forall\alpha\in\mathcal{F}\:\:(\Delta_X\subseteq\alpha)$,
\item $\forall\alpha,\beta\in\mathcal{F}\:\:(\alpha\cap\beta\in\mathcal{F})$,
\item $\forall\alpha\in\mathcal{F}\:\:\forall\beta\subseteq X\times X\:\:(\alpha\subseteq\beta\Rightarrow\beta\in\mathcal{F})$,
\item $\forall\alpha\in\mathcal{F}\:\:(\alpha^{-1}\in\mathcal{F})$,
\item $\forall\alpha\in\mathcal{F}\:\:\exists\beta\in\mathcal{F}\:\:(\beta\circ\beta\subseteq\alpha)$,
\end{itemize}
where $\Delta_X=\{(x,x):x\in X\}$ and $\alpha^{-1}=\{(y,x):(x,y)\in\alpha\}$ also $\alpha\circ\beta=\{(x,z):\exists y\:\:
((x,y)\in\beta\wedge(y,z)\in\alpha)\}$ (for $\alpha,\beta\subseteq X\times X$), then we call $\mathcal{F}$ a uniform
structure on $X$, also we call the elements of $\mathcal{F}$ entourages on $X$. For $\alpha\in\mathcal{F}$ and $x\in X$
let $\alpha[x]=\{y\in X:(x,y)\in \alpha\}$, then $\{U\subseteq X:\forall y\in U\:\:\exists\beta\in\mathcal{F}\:\:(\beta[y]\subseteq U)\}$
is a topology on $X$, we call it uniform topology on $X$ induced by uniform structure $\mathcal{F}$ and call $(X,\mathcal{F})$
or briefly $X$ a uniform space. We call the topological space $Y$ uniformzable if there exists a uniform structure $\mathcal{E}$ on
$Y$ such that uniform topology induced by $\mathcal{E}$ coincides with original topology on $Y$, also in this case
we say $\mathcal{E}$ is a compatible uniform structure on $Y$. Compact Hausdorff spaces are uniformzable
and admit a unique compatible uniform structure. 
In particular compact metric space in $(X,d)$
$\{\alpha\subseteq X\times X:\exists\varepsilon>0\:\:(O_\varepsilon\subseteq\alpha)\}$
is unique compatible uniform structure on $X$ (where $O_\varepsilon=\{(z,w)\in X\times X:d(z,w)<\varepsilon\}$
for every $\varepsilon>0$). 
For more details on uniform spaces see~\cite{dug, eng}.
\subsection{Ideals and Fort spaces}
Let's recall that we say the nonempty collection $\mathcal I$ of subsets of $W$ is an
ideal on $W$ if for all $A,B\in \mathcal{I}$ and $C$ with $C\subseteq A$ we have
$A\cup B,C\in\mathcal{I}$, in particular $\varnothing\in\mathcal{I}$. Although most of the authors in ideal $\mathcal{I}$ on
$W$ have supposed $X\notin \mathcal{I}$~\cite{set} we allow this condition too
(so $\mathcal{I}=\mathcal{P}(W)$ is allowed in this text, where $\mathcal{P}(W)=\{A:A\subseteq W\}$
is the collection of all subsets of $W$).
\\
Suppose $b\in F$ and equip $F$ with topology $\{U\subseteq F:b\notin U\vee(
F\setminus U$ is finite$)\}$, then we say $F$ is a Fort space with particular point $b$ (it's evident that Fort space $F$ with particular point $b$ is just
one point compactification (or Alexandroff compactification) of
discrete space $F\setminus\{b\}$) \cite{counter}.
\subsection{Background on Li--Yorke chaos in dynamical systems}
By a dynamical system $(X,f)$ we mean a topological space $X$ and continuous map
$f:X\to X$. In dynamical system $(X,f)$ with compact metric phase space $(X,d)$
we say $x,y\in X$ are
\begin{itemize}
\item[1.] proximal if
$\mathop{\liminf}\limits_{n\to\infty}d(f^n(x),f^n(y))=0$,
\item[2.] asymptotic if
$\mathop{\lim}\limits_{n\to\infty}d(f^n(x),f^n(y))=0$,
\item[3.] scrambled if
$\mathop{\liminf}\limits_{n\to\infty}d(f^n(x),f^n(y))=0$ and $\mathop{\limsup}\limits_{n\to\infty}d(f^n(x),f^n(y))>0$.
\end{itemize}
We say the dynamical
system $(X,f)$ is Li--Yorke chaotic if it has an uncountable subset like $A$ such that every
distinct $x,y\in A$ are scrambled. So for unique
compatible uniform structure on $X$, 
$\mathcal F=\{\alpha\subseteq X\times X:\exists\varepsilon>0\:\:(O_\varepsilon\subseteq\alpha)\}$, which is introduced in subsection~\ref{this}, we may use the following definitions too,
we say $x,y\in X$ are
\begin{itemize}
\item[1$^\prime$.] proximal if there exist $z\in X$ and net $\{n_\alpha\}_{\alpha\in\Gamma}$ in $\mathbb N$ with \[\mathop{\lim}\limits_{\alpha\in\Gamma}f^{n_\alpha}(x)=z=
\mathop{\lim}\limits_{\alpha\in\Gamma}f^{n_\alpha}(y)\:,\]
\item[2$^\prime$.] asymptotic if for every $\alpha\in{\mathcal F}$ the set $\{n\in\mathbb{N}:(f^n(x),f^n(y))\notin \alpha\}$ is finite,
\item[3$^\prime$.] scrambled if
 they are proximal and non--asymptotic.
 \end{itemize}
\subsection{Background on transformation semigroup}
By a transformation semigroup  (resp. transformation group)
$(X,S,\pi)$ or simply $(X,S)$ we mean a compact Hausdorff space $X$,
discrete topological semigroup (resp. group)  $S$ with identity $e$ and continuous map
$\pi:\mathop{X\times S\to X}\limits_{\:\:\:\:\:(x,s)\mapsto xs}$ such that
for all $x\in X$ and $s,t\in S$ we have $xe=x$, $x(st)=(xs)t$~\cite{ellis}. In particular, every dynamical system
$(X,f)$ may be considered as the transformation semigroup
$(X,\mathbb{N}\cup\{0\},\pi_f)$ where $\pi_f(x,n)=f^n(x)$
($x\in X,n\geq0$).
\\
We say $(X,(G_\alpha;\alpha\in\Gamma))$ is a multi---transformation semigroup (resp.  multi---transformation group) if for each $\alpha\in\Gamma$,
$(X,G_\alpha)$ is a transformation semigroup (resp.  transformation group), moreover for each distinct $\alpha_1,\ldots,\alpha_n\in\Gamma$ and $x\in X,s_1\in G_{\alpha_1},\ldots
s_n\in G_{\alpha_n}$ we have
\[(\cdots(xs_1)s_2)\cdots)s_n=(\cdots(xs_{\sigma(1)})s_{\sigma(2)})\cdots)s_{\sigma(n)}\]
for each permutation $\sigma:\{1,\ldots,n\}\to\{1,\ldots,n\}$.
\\
For transformation semigroup (resp.  transformation group) $(X,G)$, we say the multi--transformation semigroup (resp.  multi---transformation group) $(X,(G_\alpha;\alpha\in\Gamma))$
is a co--decomposition of $(X,G)$ if $G_\alpha$s are distinct sub--semigroups (resp. subgroups) of $G$, and $G$ is the semigroup (resp. group) generated by ${\displaystyle\bigcup_{\alpha\in\Gamma}G_\alpha}$ \cite{111}.
\begin{definition}\label{taha20}
In transformation semigroup $(X,S)$ with uniform phase space $(X,\mathcal{F})$ suppose $\mathcal I$ is an ideal on semigroup $S$.
We say $x,y\in X$ are:
\begin{itemize}
\item proximal if there exists $z\in X$ and a net $\{g_\alpha\}_{\alpha\in\Gamma}$ in $S$ with~\cite{ellis}
\[\mathop{\lim}\limits_{\alpha\in\Gamma}xg_\alpha=z=
\mathop{\lim}\limits_{\alpha\in\Gamma}yg_\alpha\:,\]
\item asymptotic modulo $\mathcal I$ if for every $\alpha\in{\mathcal F}$ we have $\{s\in S:(xs,ys)\notin \alpha\}\in\mathcal{I}$,
\item scrambled modulo $\mathcal I$ if
 they are proximal and non--asymptotic modulo $\mathcal I$,
\item $stab(x):=\{g\in S:xg=x\}$ is the stablizer of $x$.
 \end{itemize}
We denote the collection of all proximal pairs of $(X,S)$ with $Prox(X,S)$.
Moreover we have $Prox(X,S)=\bigcap\{\alpha S^{-1}:\alpha\in\mathcal{F}\}$ where for all $\alpha\in\mathcal{F}$
we have $\alpha S^{-1}=\{(z,w)\in X\times X:\exists s\in S\:((zs,ws)\in\alpha)\}$~\cite{gerko}.
Also we denote the collection of all asymptotic pairs $(z,w)$
modulo ideal $\mathcal I$ (i.e., $z,w\in X$ are asymptotic modulo ideal $\mathcal I$)
with $Asym_{\mathcal{I}}(X,S)$.
\\
Also we say $D\subseteq X$ with at least two elements is an scrambled set modulo $\mathcal I$
if for all distinct $z,w\in D$ we have $(z,w)\in Prox(X,S)\setminus Asym_{\mathcal{I}}(X,S)$.
We say $(X,S)$ is Li--Yorke chaotic  modulo $\mathcal I$ if it contains an uncountable scrambled subset  modulo $\mathcal I$.
\end{definition}
\begin{definition}\label{HM10}
In transformation semigroup $(X,S)$, $\mathcal{P}_{\rm fin}(S):=\{D\subseteq S:D\:{\rm is\: finite}\}$ is
an ideal on $S$, let
\[Asym(X,S):=Asym_{\mathcal{P}_{\rm fin}(S)}(X,S)\:.\]
We say
$(X,S)$ is Li--Yorke chaotic if it is Li--Yorke chaotic  modulo $\mathcal{P}_{\rm fin}(S)$. Also we say $x,y\in X$ are asymptotic
(resp. scrambled) if they are asymptotic modulo $\mathcal{P}_{\rm fin}(S)$ (resp. scrambled modulo $\mathcal{P}_{\rm fin}(S)$).
\end{definition}
\begin{note}
Consider dynamical system $(X,f)$ with compact metric phase space $X$ and transformation semigroup
 $(X,\mathbb{N}\cup\{0\})$ with $xn:=f^n(x)$ (for all $x\in X,n\geq0$), then $(X,f)$ is a Li--Yorke chaotic dynamical system if
 and only if  $(X,\mathbb{N}\cup\{0\})$ is a Li--Yorke chaotic
 transformation semigroup.
\end{note}
\begin{note}
For compact metric space $X$ with compatible metric $d$, and infinite countable semigroup
$S=\{t_1,t_2,\ldots\}$ (with distinct $t_n$s), in transformation semigroup $(X,S)$ the following statements are equivalent:
\begin{itemize}
\item[A.] $(X,S)$ is Li--Yorke chaotic (according to
Definition~\ref{HM10}),
\item[B.]  There exists an uncountable subset $A$ of $X$ such that for any distinct points $x,y\in A$ we have $(x,y)\in Prox(X,S)$ (i.e. there exists a sequence $\{s_n\}_{n\geq1}$ in $S$ with $\mathop{\lim}_{n\to\infty}d(xs_n,ys_n)=0$), and there exists 
$(r_n)_{n\geq1}\in\mathop{\prod}\limits_{n\geq1}S\setminus \{t_1,\ldots,t_n\}$ with $\mathop{\lim}\limits_{n\to\infty}d(xr_n,yr_n)>0$.
\item[C.]  for any increasing sequence $\mathfrak{F}=\{F_n\}_{n\geq1}$ of compact subsets of $S$ there exists an uncountable subset $A_{\mathfrak F}$ of $X$ such that for any distinct points $x,y\in A_{\mathfrak F}$ we have $(x,y)\in Prox(X,S)$, and there exists 
$(r_n)_{n\geq1}\in\mathop{\prod}\limits_{n\geq1}S\setminus F_n$ with $\mathop{\lim}\limits_{n\to\infty}d(xr_n,yr_n)>0$
(i.e., $(X,S)$ is Li--Yoke chaotic according to \cite[Definition 1.2]{dai}).
\end{itemize}
\end{note}
\begin{proof} 
Let's consider the following two claims for every $x,y\in X$:
\\
{\bf Claim 1.} If $(x,y)\notin Asym(X,S)$, then for any increasing sequence
 $\{F_n\}_{n\geq1}$ of finite subsets of $S$,
 there exists 
$(r_n)_{n\geq1}\in\mathop{\prod}\limits_{n\geq1}S\setminus F_n$ with $\mathop{\lim}\limits_{n\to\infty}d(xr_n,yr_n)>0$.
 \\
 {\it Proof of Claim 1.} Suppose $(x,y)\notin Asym(X,S)$, then there exists $\delta>0$
 such that $D:=\{s\in S:d(xs,ys)>\delta\}(=\{s\in S:(xs,ys)\notin O_\delta\})$ is infinite.
 Now consider increasing sequence $\{F_n\}_{n\geq1}$ of finite subsets of $S$,
 for all $n\geq1$ there exists $p_n\in D\setminus F_n$ also we may suppose $p_n$s are paiwise distinct, thus
 for all $n\geq1$, $d(xp_n,yp_n)>\delta$ which leads to
 $\varepsilon:=\mathop{\liminf}\limits_{n\to\infty}d(xp_n,yp_n)\geq\delta$,
 so $\{p_n\}_{n\geq1}$ has a subsequence $\{p_{n_k}\}_{k\geq1}$ with
  $\varepsilon=\mathop{\lim}\limits_{k\to\infty}d(xp_{n_k},yp_{n_k})>0$,
For all $k\geq1$ we have $n_k\geq k$ and $F_k\subseteq F_{n_k}$,
hence $p_{n_k}\in S\setminus F_{n_k}\subseteq S\setminus F_k$.  Thus
$(p_{n_k})_{k\geq1}\in\mathop{\prod}\limits_{k\geq1}S\setminus F_k$ which completes the proof of Claim 1.
\\
{\bf Claim 2.} If
 there exists 
$(r_n)_{n\geq1}\in\mathop{\prod}\limits_{n\geq1}S\setminus \{t_1,\ldots,t_n\}$ with $\mathop{\lim}\limits_{n\to\infty}d(xr_n,yr_n)>0$,
then  $(x,y)\notin Asym(X,S)$.
 \\
 {\it Proof of Claim 2.}
For all $n\geq1$ there exists $s_n\in S\setminus\{t_1,\ldots,t_n\}$ with $\varepsilon:=\mathop{\lim}\limits_{n\to\infty}d(xs_n,ys_n)>0$, so there exists
 $N\geq1$ with $d(xs_n,ys_n)>\varepsilon/2$ for all $n\geq N$ which leads to
 $\{s_n:n\geq N\}\subseteq \{s\in S:d(xs,ys)>\varepsilon/2\}$. 
 If $\{s_n:n\geq N\}$ is finite, then there exists $M\geq1$ with
 $\{s_n:n\geq N\}\subseteq \{t_1,\ldots,t_M\}$ in particular
 $s_{N+M}\in \{t_1,\ldots,t_M\}$ which is in contradiction with $s_{N+M}\in S\setminus\{t_1,\ldots,t_{N+M}\}$, hence 
 $\{s_n:n\geq N\}$ is infinte. Therefore $\{s\in S:d(xs,ys)>\varepsilon/2\}(=\{s\in S:(xs,ys)\notin O_{\varepsilon/2}\})$
 is infinite  too and $(x,y)\notin Asym(X,S)$.
\\
Now we are ready to prove the Note.
\\
``(A) $\Rightarrow$ (C)'' Use Claim 1 and the fact that the collection of finite subsets of $S$ is equal to the collection of compact subsets of $S$ (since $S$ is finite).
\\
``(C) $\Rightarrow$ (B)'' It is obvious, since $\{\{t_1,\ldots,t_n\}\}_{n\geq1}$ is an incresing sequence of compact subsets of $S$.
\\
``(B) $\Rightarrow$ (A)'' Use Claim 2.
\end{proof}
\section{Asymptoticity and Li--Yorke chaoticity modulo an ideal}
\noindent In this section we bring some elementary properties of
Li--Yorke chaoticity modulo an ideals in transformation semigroups,
in topics like products, quotient,
\linebreak
co--decomposition, .... in transformation semigroups.
\begin{theorem}
In transformation semigroup $(X,S)$ suppose $\mathcal{I}$ and $\mathcal{J}$
are ideals on $S$ with $\mathcal{I}\subseteq\mathcal{J}$. We have:
\\
$\bullet$ $Asym_{\mathcal{I}}(X,S)\subseteq Asym_{\mathcal{J}}(X,S)$,
\\
$\bullet$ if $D\subseteq X$ is an scrambled set modulo $\mathcal J$, then it is an scrambled set modulo $\mathcal I$,
\\
$\bullet$ if $(X,S)$ is Li--Yorke chaotic  modulo $\mathcal J$, then it is Li--Yorke chaotic  modulo $\mathcal I$.
\end{theorem}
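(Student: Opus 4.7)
The plan is to observe that all three bullets cascade from the very first one, so the work reduces to unwinding the definition of $Asym_{\mathcal{I}}(X,S)$ and using monotonicity of ideals under inclusion.

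For the first bullet, I would take $(x,y)\in Asym_{\mathcal{I}}(X,S)$ and fix an arbitrary $\alpha\in\mathcal{F}$. By Definition~\ref{taha20}, the set $E_\alpha:=\{s\in S:(xs,ys)\notin\alpha\}$ belongs to $\mathcal{I}$. Since $\mathcal{I}\subseteq\mathcal{J}$, we immediately get $E_\alpha\in\mathcal{J}$, and this holds for every $\alpha\in\mathcal{F}$, so $(x,y)\in Asym_{\mathcal{J}}(X,S)$. This is a one-line argument from the definition.

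For the second bullet, I would use the contrapositive of the first bullet: if $(x,y)\notin Asym_{\mathcal{J}}(X,S)$ then $(x,y)\notin Asym_{\mathcal{I}}(X,S)$. So if $D\subseteq X$ is scrambled modulo $\mathcal{J}$, every pair of distinct points $x,y\in D$ is proximal (a condition not depending on any ideal) and fails to be in $Asym_{\mathcal{J}}(X,S)$, hence fails to be in $Asym_{\mathcal{I}}(X,S)$. So $D$ is scrambled modulo $\mathcal{I}$.

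The third bullet is then immediate: an uncountable scrambled set modulo $\mathcal{J}$ witnessing Li--Yorke chaos modulo $\mathcal{J}$ is, by the second bullet, an uncountable scrambled set modulo $\mathcal{I}$, so $(X,S)$ is Li--Yorke chaotic modulo $\mathcal{I}$. There is no real obstacle here; the whole theorem is a direct monotonicity statement, and the only thing to be careful about is remembering that proximality is defined independently of the ideal, so only the asymptotic clause needs to be tracked through the inclusion $\mathcal{I}\subseteq\mathcal{J}$.
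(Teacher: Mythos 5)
Your proof is correct and is exactly the argument the paper has in mind: the paper's own proof just says ``use the definition,'' and your write-up is the straightforward unwinding of that, with the three bullets cascading from the inclusion $\mathcal{I}\subseteq\mathcal{J}$ and the observation that proximality does not involve the ideal.
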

\begin{proof}
Use the definition of asympoticity and Li--Yorke chaoticity modulo an ideal.
\end{proof}
\noindent In the transformation semigroup $(X,S)$ if $T$ is a sub--semigroup of $S$, then
we may consider transformation semigroup $(X,T)$ (with induced action of $S$ on $X$)
in a natural way too, in the following Theorem we deal with this type of
transformation semigroups.
\begin{theorem}
In transformation semigroup $(X,S)$ suppose $T$ is a sub--semigroup of $S$ and $\mathcal{I}$
is an ideals on $T$, then:
\begin{itemize}
\item[1.] $Asym_{\mathcal{I}}(X,S)\subseteq Asym_{\mathcal{I}}(X,T)$,
\item[2.] if $D\subseteq X$ is an scrambled set modulo $\mathcal I$ in $(X,T)$, then it is an scrambled set modulo $\mathcal I$ in $(X,S)$,
\item[3.] if $(X,T)$ is Li--Yorke chaotic  modulo $\mathcal I$, then $(X,S)$ is Li--Yorke chaotic  modulo $\mathcal I$,
\item[4.] if $(X,S)$ is co--decomposable to Li--Yorke chaotic  modulo $\mathcal I$ transformation semigroups if and only if it is Li--Yorke chaotic  modulo $\mathcal I$ (so with phase semigroups all of them containing $\bigcup\mathcal{I}$).
\end{itemize}
\end{theorem}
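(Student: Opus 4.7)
The plan is to treat the four items in sequence, with Parts~2--4 all reducing to Part~1 once one records the basic set-theoretic observation that the ``bad entourage set'' shrinks when the acting semigroup shrinks.

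For Part~1 I would fix a pair $(x,y)\in Asym_{\mathcal{I}}(X,S)$ and an arbitrary entourage $\alpha\in\mathcal{F}$, and then simply note that
\[
\{t\in T:(xt,yt)\notin\alpha\}\;\subseteq\;\{s\in S:(xs,ys)\notin\alpha\}.
\]
The right-hand side lies in $\mathcal{I}$ by the assumption $(x,y)\in Asym_{\mathcal{I}}(X,S)$, so the left-hand side does too by the subset-closure axiom in the definition of an ideal. This gives $(x,y)\in Asym_{\mathcal{I}}(X,T)$, proving~(1).

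Part~2 combines Part~1 with the companion inclusion $Prox(X,T)\subseteq Prox(X,S)$, which is immediate because a proximality-witnessing net in $T$ is automatically a net in $S$. Hence for distinct $z,w\in D$ one has $(z,w)\in Prox(X,T)\subseteq Prox(X,S)$, while the contrapositive of Part~1 converts $(z,w)\notin Asym_{\mathcal{I}}(X,T)$ into $(z,w)\notin Asym_{\mathcal{I}}(X,S)$; thus $D$ remains scrambled modulo $\mathcal{I}$ in $(X,S)$. Part~3 is then a one-line corollary: the same uncountable $D$ that witnesses Li--Yorke chaoticity modulo $\mathcal{I}$ for $(X,T)$ witnesses it for $(X,S)$.

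For Part~4, the easy direction uses the trivial one-factor co--decomposition of $(X,S)$ with $G_{*}:=S$ itself; the nontrivial direction fixes any single sub--semigroup $G_{\alpha_0}$ occurring in the co--decomposition and invokes Part~3 with $T:=G_{\alpha_0}$. I do not anticipate any serious obstacle. The only subtle point is notational consistency between regarding $\mathcal{I}$ as an ideal on $T$ versus on $S$, so that the displayed inclusion above is a meaningful statement about ideal membership; this is precisely what the containment $T\subseteq S$ together with the ideal axioms delivers, and it is the load-bearing step of the whole argument.
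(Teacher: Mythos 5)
Your proposal is correct and follows essentially the same route as the paper: item (1) via the inclusion $\{t\in T:(xt,yt)\notin\alpha\}\subseteq\{s\in S:(xs,ys)\notin\alpha\}$ together with the subset-closure of $\mathcal{I}$ (and the observation, which the paper also makes first, that an ideal on $T$ is automatically an ideal on $S$), then item (2) from (1) plus $Prox(X,T)\subseteq Prox(X,S)$, and items (3)--(4) cascading from (2). In fact you supply the short argument for item (4) (trivial one-factor co--decomposition for one direction, item (3) applied to a single factor for the other) which the paper's proof leaves implicit.
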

\begin{proof} First of all note that $\mathcal I$ is an ideal on $S$. Consider compatible uniform
structure $\mathcal F$ on $X$.
\\
1) For $x,y\in X$ we have (use $\{s\in T:(xs,ys)\notin U\}\subseteq \{s\in S:(xs,ys)\notin U\}$):
\begin{eqnarray*}
(x,y)\in Asym_{\mathcal{I}}(X,S) & \Rightarrow & (\forall U\in\mathcal{F}\:\{s\in S:(xs,ys)\notin U\}\in\mathcal{I}) \\
& \Rightarrow & (\forall U\in\mathcal{F}\:\{s\in T:(xs,ys)\notin U\}\in\mathcal{I}) \\
& \Rightarrow & (x,y)\in Asym_{\mathcal{I}}(X,T)\:.
\end{eqnarray*}
2) Use item (1) and $Prox(X,T)\subseteq Prox(X,S)$.
\\
3) Use item (2).
\\
4) If $(X,S)$ is Li--Yorke chaotic  modulo $\mathcal I$, then $(X,S)$ is a co--decomposition of itself to Li--Yorke chaotic  modulo $\mathcal I$ transformation semigroups. On the other
hand if $(X,(S_\alpha:\alpha\in\Gamma))$ is co--decomposition
of $(X,S)$ to Li--Yorke chaotic  modulo $\mathcal I$ transformation semigroups such that for all $\alpha\in\Gamma$
we have $\bigcup\mathcal{I}\subseteq S_\alpha$, then choose
$\alpha_0\in\Gamma$. Since $(X,S_{\alpha_0})$ is  Li--Yorke chaotic  modulo $\mathcal I$, $S_{\alpha_0}$ is a subsemigroup of $S$ and $\mathcal I$
is an ideal on $S_{\alpha_0}$ too, then $(X,S)$ is Li--Yorke chaotic  modulo $\mathcal I$ by item~(2).
\end{proof}
\noindent In transformation semigroup $(X,S)$ we say nonempty subset $Y$ of $X$ is invariant
if $YS:=\{ys:y\in Y,s\in S\}\subseteq Y$. If $Y$ is a closed invariant subset of $X$
then we may consider transformation semigroup $(Y,S)$ with induced action of $S$ on $X$.
\begin{note}
In transformation semigroup $(X,S)$ suppose $Y$ is a closed invariant subset of $X$
and $\mathcal{I}$ is an ideal on $S$, then
\\
$\bullet$ $Asym_{\mathcal{I}}(Y,S)\subseteq Asym_{\mathcal{I}}(X,S)$,
\\
$\bullet$ if $D\subseteq Y$ is an scrambled set modulo $\mathcal I$ in $(X,S)$, then it is an scrambled set modulo $\mathcal I$ in $(Y,S)$,
\end{note}
\noindent In the following Theorem we deal; with product of transformation semigroups.
\begin{theorem}\label{salam80}
Suppose $\{(X_\alpha,S):\alpha\in\Gamma\}$ is a nonempty set of transformation semigroups
and $\mathcal{I}$ is an ideal on $S$.
In transformation semigroup $(\mathop{\prod}\limits_{\alpha\in\Gamma}X_\alpha,S)$
with
\[(x_\alpha)_{\alpha\in\Gamma}s:=(x_\alpha s)_{\alpha\in\Gamma}\:\:\:\:\:
((x_\alpha)_{\alpha\in\Gamma}\in\mathop{\prod}\limits_{\alpha\in\Gamma}X_\alpha,
s\in S)\]
we have:
\\
1. $Asym_{\mathcal I}(\mathop{\prod}\limits_{\alpha\in\Gamma}X_\alpha,S)=\{((z_\alpha)_{\alpha\in\Gamma},(w_\alpha)_{\alpha\in\Gamma}):\forall
\alpha\in\Gamma\:((z_\alpha,w_\alpha)\in Asym_{\mathcal I}(X_\alpha,S))\}$,
\\
2. if $(z_\alpha)_{\alpha\in\Gamma},(w_\alpha)_{\alpha\in\Gamma}$ are scrambled modulo $\mathcal{I}$
(in transformation semigroup $(\mathop{\prod}\limits_{\alpha\in\Gamma}X_\alpha,S)$), then there exists
$\beta\in\Gamma$ such that $z_\beta,w_\beta$ are scrambled modulo $\mathcal{I}$
(in transformation semigroup $(X_\beta,S)$),
\\
3. for $\beta\in\Gamma$ suppose $p,q\in X_\beta$ and for each $\alpha\in\Gamma$ choose $z_\alpha\in X_\alpha$,
let
\[x_\alpha:=\left\{\begin{array}{lc} p & \alpha=\beta \:, \\ z_\alpha & \alpha\neq\beta \:, \end{array} \right. \:\:\:\:
y_\alpha:=\left\{\begin{array}{lc} q & \alpha=\beta \:, \\ z_\alpha & \alpha\neq\beta \:, \end{array} \right.\]
then $(x_\alpha)_{\alpha\in\Gamma},(y_\alpha)_{\alpha\in\Gamma}$ are scrambled modulo $\mathcal{I}$
(in transformation semigroup
\linebreak
$(\mathop{\prod}\limits_{\alpha\in\Gamma}X_\alpha,\mathop{\prod}\limits_{\alpha\in\Gamma}S_\alpha)$),
if and only if
$p,q$ are scrambled modulo $\mathcal{I}$
(in transformation semigroup $(X_\beta,S)$),
\\
4. if there exists $\beta\in\Gamma$ such that $(X_\beta,S)$ is Li--Yorke chaotic modulo $\mathcal I$,
then $(\mathop{\prod}\limits_{\alpha\in\Gamma}X_\alpha,S)$ is Li--Yorke chaotic modulo $\mathcal I$,
\end{theorem}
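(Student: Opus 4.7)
The whole theorem rests on one basic observation about the (unique) compatible uniform structure $\widetilde{\mathcal F}$ on the compact Hausdorff space $\prod_{\alpha\in\Gamma}X_\alpha$: it has a base of ``cylinder'' entourages $U=\prod_{\alpha\in\Gamma}U_\alpha$ with $U_\alpha=X_\alpha\times X_\alpha$ off a finite set $F\subseteq\Gamma$ and $U_\alpha\in\mathcal F_\alpha$ for $\alpha\in F$. For such $U$,
\[\{s\in S:((x_\alpha)_{\alpha\in\Gamma}s,(y_\alpha)_{\alpha\in\Gamma}s)\notin U\}=\bigcup_{\alpha\in F}\{s\in S:(x_\alpha s,y_\alpha s)\notin U_\alpha\}\]
is a finite union and hence sits in $\mathcal I$ iff each of its members does. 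Item 1 follows by letting $U$ range over the base. Item 2 uses the same identity contrapositively: non-asymptoticity in the product forces non-asymptoticity at some coordinate $\beta\in F$, while proximality in the product transfers to every coordinate by continuity of $\pi_\beta$ applied to the defining net, so that $\beta$-coordinate pair is scrambled.

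Item 3 is a direct application. Off $\beta$ the tuples $(x_\alpha)$ and $(y_\alpha)$ agree, so by item 1 the only coordinate capable of witnessing non-asymptoticity is $\beta$; the $(\Rightarrow)$ direction then falls out of item 2. For $(\Leftarrow)$, non-asymptoticity of $p,q$ in $X_\beta$ yields non-asymptoticity in the product via the cylinder entourage with $U_\beta$ witnessing $p,q$; and given a net $(s_\mu)$ witnessing proximality of $p,q$ in $X_\beta$, Tychonoff compactness of $\prod_\alpha X_\alpha$ lets one refine $(s_\mu)$ to a subnet converging at every coordinate, producing a proximality witness in the product. Item 4 is then immediate: fix an auxiliary $(z_\alpha)_{\alpha\neq\beta}$ and send an uncountable scrambled $B\subseteq X_\beta$ to $\{\xi_p:p\in B\}\subseteq\prod_\alpha X_\alpha$ by the construction of item 3. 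The forward direction of item 5 is just item 4.

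The converse of item 5 is the only genuinely nontrivial step, and the place where countability of $\Gamma$ is actually used. Given an uncountable scrambled $A\subseteq\prod_\alpha X_\alpha$ and $\Gamma=\{\beta_n:n\in\mathbb N\}$, item 2 assigns to every pair $\{\xi,\eta\}\in[A]^2$ an index $n(\xi,\eta)\in\mathbb N$ with $(\pi_{\beta_{n(\xi,\eta)}}(\xi),\pi_{\beta_{n(\xi,\eta)}}(\eta))$ scrambled in $X_{\beta_{n(\xi,\eta)}}$. The plan is to extract from this $\mathbb N$-valued colouring of $[A]^2$ an uncountable monochromatic $B\subseteq A$ of some colour $n$; on $B$ the projection $\pi_{\beta_n}$ is automatically injective (scrambled pairs are distinct), so $\pi_{\beta_n}(B)$ is an uncountable scrambled subset of $X_{\beta_n}$. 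The main obstacle I anticipate is exactly this Ramsey-type extraction: pulling a truly uncountable monochromatic set out of a countable colouring of pairs of an uncountable set is the delicate point, and an Erd\H{o}s--Dushnik--Miller-type partition principle (or extra cardinal input) is the tool I would reach for.
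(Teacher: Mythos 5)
Your handling of items 1--4 is correct and follows essentially the paper's own route: the paper proves item 1 with exactly your cylinder argument (sub-basic entourages constraining one coordinate, every entourage of the product containing a finite intersection of them, and closure of $\mathcal I$ under finite unions), item 2 by projecting the proximality net together with item 1, and item 3 only with the hint ``use (2)''; your subnet-via-Tychonoff argument for the converse of item 3 and the map $p\mapsto\xi_p$ in item 4 supply precisely the details the paper omits (its printed proof stops after item 3, so no argument for items 4 and 5 is given there at all).

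The genuine gap is the converse of item 5, and the repair you reach for cannot work. You need an uncountable monochromatic set for a colouring of the pairs of an uncountable set with countably many colours, and that partition relation fails in ZFC: already $\omega_1\not\to(\omega_1)^2_2$ by Sierpi\'nski's colouring, while the Erd\H{o}s--Dushnik--Miller theorem only gives $\kappa\to(\kappa,\aleph_0)^2$, i.e.\ a countably \emph{infinite} set in the second colour, which is useless here. Worse, the implication you are trying to prove is false as stated, so no combinatorial extraction can save it. Let $F=\mathbb{Z}\cup\{b\}$ be the infinite Fort space with particular point $b$, let $G=\mathbb{Z}\times\mathbb{Z}$ act by $x(m,n)=x+n$ for $x\in\mathbb{Z}$ and $b(m,n)=b$, take $\Gamma=\mathbb{N}$ with every factor a copy of $(F,G)$ and $\mathcal I=\mathcal{P}_{\rm fin}(G)$. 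Every $x\in F\setminus\{b\}$ has infinite orbit and infinite stabilizer $\mathbb{Z}\times\{0\}$, so by Theorem~\ref{salam30} only diagonal pairs of $F$ are asymptotic; hence by item 1 no pair of distinct points of $F^{\mathbb{N}}$ is asymptotic, and along $g_k=(0,k)$ every coordinate of every point converges to $b$, so every pair of points of $F^{\mathbb{N}}$ is proximal. Thus the uncountable set $F^{\mathbb{N}}$ is itself scrambled and the product is Li--Yorke chaotic, whereas each factor, being countable, has no uncountable subset at all and so is not Li--Yorke chaotic. So the ``product chaotic $\Rightarrow$ some factor chaotic'' half of item 5 needs additional hypotheses; as stated it is not something your Ramsey plan (or any other argument) can deliver, and the paper offers no proof of it either.
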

\begin{proof}
{\bf 1)} For compact Hausdorff topological space $Y$ suppose $\mathcal{F}_Y$ is the unique compatible
uniform structure on $Y$. For $\beta\in\Gamma$ and $U\in\mathcal{F}_{X_\beta}$ let:
\[M_\beta(U):=\{((z_\alpha)_{\alpha\in\Gamma},(w_\alpha)_{\alpha\in\Gamma})\in
\mathop{\prod}\limits_{\alpha\in\Gamma}X_\alpha\times\mathop{\prod}\limits_{\alpha\in\Gamma}X_\alpha:
(z_\beta,w_\beta)\in U\}\:.\]
Now suppose $((z_\alpha)_{\alpha\in\Gamma},(w_\alpha)_{\alpha\in\Gamma})\in
Asym_{\mathcal I}(\mathop{\prod}\limits_{\alpha\in\Gamma}X_\alpha,S)$, thus for
each $\beta\in\Gamma$ and $U\in \mathcal{F}_{X_\beta}$ (use $M_\beta(U)\in\mathcal{F}_{
\mathop{\prod}\limits_{\alpha\in\Gamma}X_\alpha})$) we have
\[\{s\in S:(z_\beta s,w_\beta s)\notin U\}=\{s\in S:((z_\alpha s)_{\alpha\in\Gamma},(w_\alpha s)_{\alpha\in\Gamma})\notin M_\beta(U)\}\in\mathcal{I}\]
which leads to $(z_\beta,w_\beta)\in Asym_{\mathcal I}(X_\beta,S)$. Therefore:
\[Asym_{\mathcal I}(\mathop{\prod}\limits_{\alpha\in\Gamma}X_\alpha,S)\subseteq\{((z_\alpha)_{\alpha\in\Gamma},(w_\alpha)_{\alpha\in\Gamma}):\forall
\alpha\in\Gamma\:((z_\alpha,w_\alpha)\in Asym_{\mathcal I}(X_\alpha,S))\}\:.\]
Now suppose for each $\alpha\in\Gamma$ we have $(p_\alpha,q_\alpha)\in Asym_{\mathcal I}(X_\alpha,S)$ and
$A\in\mathcal{F}_{\mathop{\prod}\limits_{\alpha\in\Gamma}X_\alpha}$. There exist
$\alpha_1,\ldots,\alpha_n\in\Gamma$ and $U_1\in\mathcal{F}_{X_{\alpha_1}},\ldots,U_n\in\mathcal{F}_{X_{\alpha_n}}$
with
\[\mathop{\bigcap}\limits_{1\leq i\leq n}M_{\alpha_i}(U_i)\subseteq A\: .\tag{*}\]
For each $i\in\{1,\ldots,n\}$ we have $(p_{\alpha_i},q_{\alpha_i})\in Asym_{\mathcal I}(X_{\alpha_i},S)$,
thus $\{s\in S:(p_{\alpha_i}s,q_{\alpha_i}s)\notin U_i\}\in\mathcal{I}$, so:
\[\mathop{\bigcup}\limits_{1\leq i\leq n}\{s\in S:(p_{\alpha_i}s,q_{\alpha_i}s)\notin U_i\}\in\mathcal{I}\tag{**}\]
thus:

$\{s\in S:((p_\alpha s)_{\alpha\in\Gamma},(q_\alpha s)_{\alpha\in\Gamma})\notin A\}$
\begin{eqnarray*}
& \mathop{\subseteq}\limits^{(*)} &
	\{s\in S:((p_\alpha s)_{\alpha\in\Gamma},(q_\alpha s)_{\alpha\in\Gamma})\notin \mathop{\bigcap}\limits_{1\leq i\leq n}M_{\alpha_i}(U_i)\} \\
& = & \mathop{\bigcup}\limits_{1\leq i\leq n} \{s\in S:((p_\alpha s)_{\alpha\in\Gamma},(q_\alpha s)_{\alpha\in\Gamma})\notin M_{\alpha_i}(U_i)\} \\
& = & \mathop{\bigcup}\limits_{1\leq i\leq n} \{s\in S:(p_{\alpha_i} s,q_{\alpha_i} s)\notin U_i\} \mathop{\in}\limits^{(**)}\mathcal{I}
\end{eqnarray*}
which shows $\{s\in S:((p_\alpha s)_{\alpha\in\Gamma},(q_\alpha s)_{\alpha\in\Gamma})\notin A\}\in\mathcal{I}$ and
$((p_\alpha s)_{\alpha\in\Gamma},(q_\alpha s)_{\alpha\in\Gamma})\in Asym_{\mathcal I}(\mathop{\prod}\limits_{\alpha\in\Gamma}X_\alpha,S)$.
Therefore:
\[Asym_{\mathcal I}(\mathop{\prod}\limits_{\alpha\in\Gamma}X_\alpha,S)\supseteq\{((z_\alpha)_{\alpha\in\Gamma},(w_\alpha)_{\alpha\in\Gamma}):\forall
\alpha\in\Gamma\:((z_\alpha,w_\alpha)\in Asym_{\mathcal I}(X_\alpha,S))\}\:.\]
{\bf 2)}  Use
$Prox(\mathop{\prod}\limits_{\alpha\in\Gamma}X_\alpha,S)\subseteq\{((z_\alpha)_{\alpha\in\Gamma},(w_\alpha)_{\alpha\in\Gamma}):\forall
\alpha\in\Gamma\:((z_\alpha,w_\alpha)\in Prox(X_\alpha,S))\}$
and item (1).
\\
{\bf 3)} If $p,q$ are scrambled modulo $\mathcal{I}$
in transformation semigroup $(X_\beta,S)$,
then by item (2), 
then $(x_\alpha)_{\alpha\in\Gamma},(y_\alpha)_{\alpha\in\Gamma}$ are scrambled modulo $\mathcal{I}$
in transformation semigroup
$(\mathop{\prod}\limits_{\alpha\in\Gamma}X_\alpha,\mathop{\prod}\limits_{\alpha\in\Gamma}S_\alpha)$. 
\\
Now suppose $(x_\alpha)_{\alpha\in\Gamma},(y_\alpha)_{\alpha\in\Gamma}$ are scrambled modulo $\mathcal{I}$
in transformation semigroup
$(\mathop{\prod}\limits_{\alpha\in\Gamma}X_\alpha,\mathop{\prod}\limits_{\alpha\in\Gamma}S_\alpha)$, then
by item (2) there exists $\alpha\in\Gamma$ such that $x_\alpha,y_\alpha$ are scrambled modulo $\mathcal{I}$
in transformation semigroup $(X_\alpha,S)$. If $\alpha\neq\beta$, then $(x_\alpha,y_\alpha)=(z_\alpha,z_\alpha)\in \Delta_{X_\alpha}\subseteq Asym_{\mathcal I}(X_\alpha,S)$ which is a contradiction to the fact that $x_\alpha,y_\alpha$ are scrambled modulo $\mathcal{I}$
and hence non--asymptotic modulo $\mathcal{I}$, therefore $\alpha=\beta$ and $p(=x_\beta),q=(y_\beta)$ are scrambled modulo $\mathcal{I}$.
\\
{\bf 4)} Use (2). 
\end{proof}
\begin{corollary}
Suppose $\{(X_\alpha,S_\alpha):\alpha\in\Gamma\}$ is a nonempty set of transformation semigroups
and for each $\alpha\in\Gamma$, $\mathcal{I}_\alpha$ is an ideal on $S_\alpha$.
In transformation semigroup $(\mathop{\prod}\limits_{\alpha\in\Gamma}X_\alpha,\mathop{\prod}\limits_{\alpha\in\Gamma}S_\alpha)$
with
\[(x_\alpha)_{\alpha\in\Gamma}(s_\alpha)_{\alpha\in\Gamma}:=(x_\alpha s_\alpha)_{\alpha\in\Gamma}\:\:\:\:\:
((x_\alpha)_{\alpha\in\Gamma}\in\mathop{\prod}\limits_{\alpha\in\Gamma}X_\alpha,
(s_\alpha)_{\alpha\in\Gamma}\in\mathop{\prod}\limits_{\alpha\in\Gamma}S_\alpha)\]
for each $\beta\in\Gamma$ and $D\in\mathcal{I}_\beta$ let
$H_\beta(D)=\{(s_\alpha)_{\alpha\in\Gamma}\in\mathop{\prod}\limits_{\alpha\in\Gamma}S_\alpha:s_\beta\in D\}$
and suppose $\mathcal{I}$ is an ideal on $\mathop{\prod}\limits_{\alpha\in\Gamma}S_\alpha$
generated by $\{H_\alpha(D):\alpha\in\Gamma,D\in\mathcal{I}_\alpha\}$. Also suppose $\mathcal R$
is an ideal on $\mathop{\prod}\limits_{\alpha\in\Gamma}S_\alpha$.
Then
we have:
\\
1.  $Asym_{\mathcal I}(\mathop{\prod}\limits_{\alpha\in\Gamma}X_\alpha,\mathop{\prod}\limits_{\alpha\in\Gamma}S_\alpha)$ is the set
\[\{((z_\alpha)_{\alpha\in\Gamma},(w_\alpha)_{\alpha\in\Gamma}):\forall
\alpha\in\Gamma\:((z_\alpha,w_\alpha)\in Asym_{{\mathcal I}_\alpha}(X_\alpha,S_\alpha))\}\:,\]
\\
2. if $(z_\alpha)_{\alpha\in\Gamma},(w_\alpha)_{\alpha\in\Gamma}$ are scrambled modulo $\mathcal{I}$
(in transformation semigroup
$(\mathop{\prod}\limits_{\alpha\in\Gamma}X_\alpha,\mathop{\prod}\limits_{\alpha\in\Gamma}S_\alpha)$),
then there exists
$\beta\in\Gamma$ such that $z_\beta,w_\beta$ are scrambled modulo $\mathcal{I}_\beta$
(in transformation semigroup $(X_\beta,S_\beta)$),
\\
3. with the same $(x_\alpha)_{\alpha\in\Gamma},(y_\alpha)_{\alpha\in\Gamma}$ as in item (3) of Theorem~\ref{salam80},
$(x_\alpha)_{\alpha\in\Gamma},(y_\alpha)_{\alpha\in\Gamma}$ are scrambled modulo $\mathcal{I}$
(in transformation semigroup
$(\mathop{\prod}\limits_{\alpha\in\Gamma}X_\alpha,\mathop{\prod}\limits_{\alpha\in\Gamma}S_\alpha)$),
if and only if
$p,q$ are scrambled modulo $\mathcal{I}_\beta$
(in transformation semigroup $(X_\beta,S_\beta)$),
\\
4. if there exists $\beta\in\Gamma$ such that $(X_\beta,S_\beta)$ is Li--Yorke chaotic modulo ${\mathcal I}_\beta$,
then $(\mathop{\prod}\limits_{\alpha\in\Gamma}X_\alpha,\mathop{\prod}\limits_{\alpha\in\Gamma}S_\alpha)$
is Li--Yorke chaptic modulo $\mathcal I$,
\end{corollary}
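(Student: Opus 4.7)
The plan is to reduce everything to Theorem~\ref{salam80} by endowing each $X_\beta$ with a common acting semigroup. Setting $S:=\prod_{\gamma\in\Gamma}S_\gamma$, I define an action of $S$ on $X_\beta$ by $x\cdot(s_\gamma)_{\gamma\in\Gamma}:=xs_\beta$; this is continuous and makes $(X_\beta,S)$ a transformation semigroup whose product (in the sense of Theorem~\ref{salam80}) is exactly $(\prod_\alpha X_\alpha,\prod_\alpha S_\alpha)$ with the diagonal action $(x_\alpha)(s_\alpha)=(x_\alpha s_\alpha)$. Applying Theorem~\ref{salam80}(1) with the common ideal $\mathcal{I}$ on $S$ already yields
\[Asym_{\mathcal{I}}(\prod\nolimits_\alpha X_\alpha,S)=\{((z_\alpha),(w_\alpha)):\forall\alpha\in\Gamma\:(z_\alpha,w_\alpha)\in Asym_{\mathcal{I}}(X_\alpha,S)\}\:,\]
so item~(1) reduces to identifying $Asym_{\mathcal{I}}(X_\beta,S)$ with $Asym_{\mathcal{I}_\beta}(X_\beta,S_\beta)$ for every $\beta\in\Gamma$.

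This identification is the main technical step. For any entourage $U$ of $X_\beta$ one has $\{(s_\gamma)\in S:(zs_\beta,ws_\beta)\notin U\}=H_\beta(D_U)$ where $D_U:=\{s\in S_\beta:(zs,ws)\notin U\}$, so the equivalence is precisely $H_\beta(D)\in\mathcal{I}\iff D\in\mathcal{I}_\beta$. The direction $(\Leftarrow)$ is immediate from the generation of $\mathcal{I}$. For $(\Rightarrow)$, suppose $H_\beta(D)\subseteq\bigcup_{i=1}^n H_{\beta_i}(E_i)$ with $E_i\in\mathcal{I}_{\beta_i}$; after merging indices sharing the same $\beta_i$, I pick, for each $\gamma\in\{\beta_1,\ldots,\beta_n\}\setminus\{\beta\}$, an element $t_\gamma\in S_\gamma$ missing the merged $E_\gamma$. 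Such choices exist provided no $S_\gamma$ lies in $\mathcal{I}_\gamma$, which is the non-degenerate case; otherwise $\mathcal{I}=\mathcal{P}(S)$ and both sides of the claimed equality collapse to trivialities. Then for every $s\in D$, the tuple obtained from $(t_\gamma)$ by placing $s$ in coordinate $\beta$ belongs to $H_\beta(D)$, and the only $H_{\beta_i}(E_i)$ that can contain it has $\beta_i=\beta$, forcing $D\subseteq\bigcup_{\beta_i=\beta}E_i\in\mathcal{I}_\beta$.

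With item~(1) established, items~(2)--(4) follow the template of Theorem~\ref{salam80}: item~(2) combines (1) with the inclusion $Prox(\prod_\alpha X_\alpha,\prod_\alpha S_\alpha)\subseteq\{((z_\alpha),(w_\alpha)):\forall\alpha\:(z_\alpha,w_\alpha)\in Prox(X_\alpha,S_\alpha)\}$; item~(3) applies (2) to the specific pair that differs only in coordinate $\beta$, with the converse obtained by lifting a scrambled pair from $X_\beta$ while freezing all other coordinates; item~(4) is item~(3) applied pointwise to an uncountable scrambled subset of $X_\beta$. The remaining obstacle is item~(5), for which I would mirror the pigeonhole argument required by Theorem~\ref{salam80}(5): given an uncountable scrambled set in the product and the countability of $\Gamma$, item~(2) provides for each scrambled pair a ``witnessing coordinate'' $\beta\in\Gamma$, and a standard partition argument over $\Gamma$ extracts an uncountable subset whose pairs all share a common $\beta_0$; the projection of this subset to $X_{\beta_0}$ is then scrambled modulo $\mathcal{I}_{\beta_0}$, witnessing Li--Yorke chaos in $(X_{\beta_0},S_{\beta_0})$.
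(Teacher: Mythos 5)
Your treatment of items (1)--(4) is correct and takes a genuinely different route from the paper: the paper's proof just says to repeat the argument of Theorem~\ref{salam80} (with the entourages $M_\beta(U)$ and the product formula for $Prox$), whereas you reduce to Theorem~\ref{salam80}(1) by letting $S=\prod_\gamma S_\gamma$ act on each factor through the $\beta$-th projection, thereby isolating the real new content in the equivalence $H_\beta(D)\in\mathcal{I}\Leftrightarrow D\in\mathcal{I}_\beta$; your merging/transversal argument for the nontrivial implication is exactly the point that the paper's ``similar method'' glosses over, and it is right (to build the transversal tuple you of course also need entries in the coordinates outside $\{\beta_1,\ldots,\beta_n\}$, which the identities supply). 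One caveat: your dismissal of the degenerate case is not accurate. If some $S_\gamma\in\mathcal{I}_\gamma$ then indeed $\mathcal{I}=\mathcal{P}(\prod_\alpha S_\alpha)$, but then the left-hand side of item (1) is all of the product square while the right-hand side need not be (another coordinate may carry a proper ideal and a non-asymptotic pair), so the equivalence --- and item (1) itself --- fails rather than ``collapses to trivialities''; the corollary has to be read with $S_\alpha\notin\mathcal{I}_\alpha$ for every $\alpha$, a hypothesis the paper (which explicitly allows improper ideals) also leaves implicit.

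The genuine gap is item (5). The ``standard partition argument'' you invoke is the assertion that a colouring of the pairs of an uncountable set by countably many colours (the witnessing coordinates supplied by item (2)) has an uncountable monochromatic subset; this partition relation is false --- by Sierpi\'nski's classical example already a two-colouring of the pairs of a set of size $\aleph_1\le 2^{\aleph_0}$ can have no uncountable monochromatic subset, i.e. $\aleph_1\not\to(\aleph_1)^2_2$, and your scrambled set is only guaranteed to be uncountable. Worse, the implication you are trying to prove fails outright, so no repair is possible: take countably many copies of a single countable system $(Y,T)$ having a scrambled pair $\{a_0,a_1\}$ (for instance the Fort space $Y=\mathbb{Z}\cup\{b\}$ with $T=\mathbb{Z}\times H$, $H$ infinite abelian, acting by $y(n,h)=y+n$ and $b(n,h)=b$, where $st(y)=\{0\}\times H$ is infinite, so $(0,1)$ is scrambled by Theorem~\ref{salam40}), with $\mathcal{I}_n=\mathcal{P}_{\rm fin}(T)$. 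The set $E=\{(a_{\epsilon_n})_{n}:\epsilon\in\{0,1\}^{\mathbb{N}}\}$ is uncountable, pairwise proximal (product formula for $Prox$), and pairwise non-asymptotic modulo $\mathcal{I}$ by your own item (1), hence an uncountable scrambled set modulo $\mathcal{I}$; yet no factor is Li--Yorke chaotic modulo $\mathcal{I}_n$, since $Y$ is countable. So only the direction already contained in item (4) is provable, and the honest conclusion is to flag item (5) (and likewise item (5) of Theorem~\ref{salam80}, which the paper's proof never addresses) rather than to rely on the pigeonhole step.
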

\begin{proof}
Use a similar method described in Theorem~\ref{salam80} and
$Prox(\mathop{\prod}\limits_{\alpha\in\Gamma}X_\alpha,\mathop{\prod}\limits_{\alpha\in\Gamma}S_\alpha)=\{((z_\alpha)_{\alpha\in\Gamma},(w_\alpha)_{\alpha\in\Gamma}):\forall
\alpha\in\Gamma\:((z_\alpha,w_\alpha)\in Prox(X_\alpha,S_\alpha))\}$.
\end{proof}
\begin{note}\label{taha50}
In transformation semigroups $(X,S),(Y,S)$ suppose $\varphi:(X,S)\to(Y,S)$ is a
homomorphism and $\mathcal{I}$ is an ideal of $S$, then
for $\mathop{\varphi\times\varphi:X\times X\to Y\times Y}\limits_{\:\:\:\:\:\:\:\:\:\:\:\:\:\:\:(x,y)\mapsto(\varphi(x),\varphi(y))}$ we have 
$\varphi\times\varphi(Prox(X,S))\subseteq Prox(Y,S)$~\cite{ellis},
and $\varphi\times\varphi(Asym_{\mathcal{I}}(X,S))\subseteq Asym_{\mathcal{I}}(Y,S)$,
suppose $(x,y)\in Asym_{\mathcal{I}}(X,S)$ and $U$ is an entourage of $Y$, since
$\varphi:X\to Y$ is continuous and $X,Y$ compact Hausdorff spaces,
$\varphi:X\to Y$ is uniformly continuous too. Thus there exists
entourage $V$ of $X$ with $\varphi\times\varphi(V)\subseteq U$. Using
$(x,y)\in Asym_{\mathcal{I}}(X,S)$ and $\varphi(zs)=\varphi(z)s$ for all $z\in X,s\in S$, we have:
\begin{eqnarray*}
\{s\in S:(\varphi(x)s,\varphi(y)s)\notin U\} & = & \{s\in S:(\varphi(xs),\varphi(ys))\notin U\} \\
& \subseteq & \{s\in S:(xs,ys)\notin V\}\in\mathcal{I}\:,
\end{eqnarray*}
therefore $\{s\in S:(\varphi(x)s,\varphi(y)s)\notin U\}\in\mathcal{I}$ and $(\varphi(x),\varphi(y))\in Asym_{\mathcal{I}}(Y,S)$.
\end{note}
\noindent In transformation semigroup $(X,S)$ suppose $\Re$ is a closed invariant
relation on $X$, then one may consider transformation semigroup $(\frac{X}{\Re},S)$~\cite{ellis, sabbagh}.
Using Note~\ref{taha50} and natural quotient homomorphism
$\pi_\Re:(X,S)\to(\frac{X}{\Re},S)$
we have the following Corollary.
\begin{corollary}
In transformation semigroup $(X,S)$ suppose $\Re$ is a closed invariant
relation on $X$ and $\mathcal{I}$ is an ideal on $S$, then
$\pi_\Re\times\pi_\Re (Asym_{\mathcal{I}}(X,S))\subseteq Asym_{\mathcal{I}}(\frac{X}{\Re},S)$.
\end{corollary}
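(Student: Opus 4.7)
The plan is to derive this corollary as a direct application of Note~\ref{taha50} to the natural quotient homomorphism $\pi_\Re \colon (X,S) \to (\tfrac{X}{\Re}, S)$. The only genuine content to check is that $\pi_\Re$ is indeed a morphism of transformation semigroups in the sense required by that note; once this is established, the desired inclusion is literally the conclusion of Note~\ref{taha50} with $\varphi := \pi_\Re$.

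First I would record, citing \cite{ellis, sabbagh}, the standard facts about the quotient. Because $\Re$ is a closed equivalence relation on the compact Hausdorff space $X$, the quotient $\tfrac{X}{\Re}$ is again compact Hausdorff, and the invariance $\Re s \subseteq \Re$ for every $s \in S$ lets us define $[x] s := [xs]$ unambiguously; continuity of this action follows from the universal property of the quotient topology together with continuity of the original action $X \times S \to X$. Consequently $\pi_\Re(xs) = \pi_\Re(x) s$ for all $x \in X$ and $s \in S$, and $\pi_\Re$ is a continuous equivariant surjection, i.e.\ a homomorphism $(X,S) \to (\tfrac{X}{\Re}, S)$.

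With $\pi_\Re$ identified as a homomorphism, I would invoke Note~\ref{taha50} verbatim: it asserts for every homomorphism $\varphi \colon (X,S) \to (Y,S)$ between transformation semigroups and every ideal $\mathcal{I}$ on $S$ that $\varphi \times \varphi(Asym_{\mathcal{I}}(X,S)) \subseteq Asym_{\mathcal{I}}(Y,S)$. Specializing $\varphi := \pi_\Re$ and $Y := \tfrac{X}{\Re}$ yields exactly $\pi_\Re \times \pi_\Re(Asym_{\mathcal{I}}(X,S)) \subseteq Asym_{\mathcal{I}}(\tfrac{X}{\Re}, S)$, which is what we want.

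I do not anticipate any real obstacle: the substantive argument, the uniform-continuity-based pullback of entourages together with equivariance, is already carried out in Note~\ref{taha50}, and the compact Hausdorffness of $\tfrac{X}{\Re}$ is what guarantees the unique compatible uniform structure needed to apply that note on the target side. The only care point is purely bureaucratic, namely to spell out the standard verification that $\pi_\Re$ qualifies as a homomorphism of transformation semigroups, after which the corollary follows by a one-line specialization.
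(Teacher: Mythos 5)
Your proposal is correct and matches the paper's own argument: the corollary is obtained precisely by applying Note~\ref{taha50} to the natural quotient homomorphism $\pi_\Re:(X,S)\to(\frac{X}{\Re},S)$, with the standard facts that $\frac{X}{\Re}$ is compact Hausdorff and the induced action is well defined and continuous. Your extra care in spelling out why $\pi_\Re$ is a homomorphism is exactly the (routine) content the paper leaves implicit.
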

\noindent Let's recall that in transformation semigroup $(X,S)$ with compatible uniform
structure $\mathcal{F}$ on $X$ for all $\alpha\in\mathcal{F}$ let $\alpha S^{-1}:=\{(z,w)\in X\times X:\exists s\in S\:(zs,ws)=(x,y)\}$,
then $Prox(X,S)=\bigcap\{\alpha S^{-1}:\alpha\in\mathcal{F}\}$~\cite{gerko}.
\begin{theorem}
In transformation semigroup $(X,S)$ with ${\rm card}(S)\geq2$ we have:
\begin{center}
$Prox(X,S)=\bigcup\{Asym_{\mathcal I}(X,S):{\mathcal I}$ is an ideal on $S$ with $\mathcal{I}\neq\mathcal{P}(S)\}$.
\end{center}
\end{theorem}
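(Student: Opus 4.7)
The plan is to show the two inclusions directly, using the characterization of $Prox(X,S)$ as the set of pairs $(x,y)$ such that for every entourage $\alpha$ there exists $s\in S$ with $(xs,ys)\in\alpha$. In the forward direction this characterization is essentially packaged in the definition of asymptoticity modulo a proper ideal, and in the reverse direction I will produce an explicit ideal witnessing asymptoticity.

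For the inclusion $\supseteq$, suppose $(x,y)\in Asym_{\mathcal{I}}(X,S)$ with $\mathcal{I}\neq\mathcal{P}(S)$; equivalently (by downward-closure), $S\notin\mathcal{I}$. Writing $D_\alpha:=\{s\in S:(xs,ys)\notin\alpha\}$ for $\alpha\in\mathcal{F}$, each $D_\alpha$ belongs to $\mathcal{I}$, hence $D_\alpha\subsetneq S$, and for each $\alpha$ I pick $s_\alpha\in S\setminus D_\alpha$. Directing $\mathcal{F}$ by reverse inclusion, compactness of $X$ yields a subnet $(xs_{\alpha_\beta})_\beta$ converging to some $z\in X$, and since $(xs_{\alpha_\beta},ys_{\alpha_\beta})\in\alpha_\beta$ with the entourages getting finer, $(ys_{\alpha_\beta})_\beta$ also converges to $z$. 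Hence $(x,y)\in Prox(X,S)$.

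For the inclusion $\subseteq$, given $(x,y)\in Prox(X,S)$, I define
\[\mathcal{I}_{x,y}:=\{A\subseteq S:\exists\alpha\in\mathcal{F}\:(A\subseteq D_\alpha)\}.\]
This is downward closed by construction, and the identity $D_\alpha\cup D_\beta=D_{\alpha\cap\beta}$ (using $\alpha\cap\beta\in\mathcal{F}$) makes $\mathcal{I}_{x,y}$ closed under finite unions, so it is an ideal on $S$. Proximality of $(x,y)$ says exactly that $D_\alpha\subsetneq S$ for every $\alpha\in\mathcal{F}$, so $S$ is not contained in any generator and therefore $S\notin\mathcal{I}_{x,y}$, i.e., $\mathcal{I}_{x,y}\neq\mathcal{P}(S)$. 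Finally $D_\alpha\in\mathcal{I}_{x,y}$ tautologically for every $\alpha$, giving $(x,y)\in Asym_{\mathcal{I}_{x,y}}(X,S)$.

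I do not expect a serious obstacle; the whole argument is a direct translation between the conditions "$S\notin\mathcal{I}$" and "there exists $s$ with $(xs,ys)\in\alpha$", together with the observation that the entourage-witness sets $\{D_\alpha\}_{\alpha\in\mathcal{F}}$ are already stable under finite unions. The hypothesis $\mathrm{card}(S)\geq 2$ is not essential for the construction itself but serves to exclude degenerate cases; in particular one should still notice that $\mathcal{I}_{x,y}=\{\emptyset\}$ when $x=y$, which is an ideal because a semigroup is non-empty.
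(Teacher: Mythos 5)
Your proof is correct and follows essentially the same route as the paper: the ideal $\mathcal{I}_{x,y}$ generated by the sets $D_\alpha$, together with the identity $D_\alpha\cup D_\beta=D_{\alpha\cap\beta}$ and the reduction of properness to $S\notin\mathcal{I}$, is exactly the paper's construction for the inclusion $Prox(X,S)\subseteq\bigcup Asym_{\mathcal I}(X,S)$. The only difference is cosmetic: where the paper invokes the cited characterization $Prox(X,S)=\bigcap\{\alpha S^{-1}:\alpha\in\mathcal{F}\}$ for the other inclusion, you prove the needed implication directly by a compactness/subnet argument, which is fine (and your formulation in fact sidesteps the paper's slip of writing a union where that intersection is meant).
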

\begin{proof}
For ideal $\mathcal I$ on $S$ with $\mathcal{I}\neq\mathcal{P}(S)$ suppose $(x,y)\in Asym_{\mathcal I}(X,S)$
and $\mathcal{F}$ is the compatible uniform structure on $X$.
For every $\alpha\in\mathcal{F}$, we have $\{s\in S:(xs,ys)\notin\alpha\}\in\mathcal{I}$, thus
$\{s\in S:(xs,ys)\notin\alpha\}\neq S$ and there exists $s\in S$ with $(xs,ys)\in\alpha$, so
$(x,y)\in \alpha S^{-1}$. Therefore $(x,y)\in\bigcup\{\alpha S^{-1}:\alpha\in\mathcal{F}\}=Prox(X,S)$.
\\
On the other hand suppose $(x,y)\in Prox(X,S)$, thus $(x,y)\in\bigcap\{\alpha S^{-1}:\alpha\in\mathcal{F}\}$
and for every $\alpha\in\mathcal{F}$, there exists $s\in S$ with $(xs,ys)\in\alpha$ so
$J_\alpha:=\{t\in S:(xt,yt)\notin\alpha\}\neq S$. Let $\mathcal{I}:=\{A\subseteq S:\exists\alpha\in\mathcal{F}\:(A\subseteq J_\alpha)\}$.
For each $\alpha,\beta\in\mathcal{F}$ we have $\alpha\cap\beta\in\mathcal{F}$ and
$J_\alpha\cup J_\beta=J_{\alpha\cap\beta}$, thus $\mathcal{I}$ is an ideal on $S$ and $(x,y)\in Asym_{\mathcal I}(X,S)$. Moreover
for all $\alpha\in\mathcal{F}$ we have $J_\alpha\neq S$ thus $S\notin\mathcal{I}$ and
$\mathcal{I}\neq\mathcal{P}(S)$.
\end{proof}
\begin{note}\label{taha10}
In transformation semigroup $(X,S)$ suppose $\mathcal I$ is an ideal on $S$, being asymptotic modulo $\mathcal{I}$
is an equivalence relation on $X$, since if $x,y$ are asymptotic modulo $\mathcal{I}$ and $y,z$ are asymptotic modulo $\mathcal{I}$,
then for each $\alpha\in\mathcal{F}_X$ there exists $\beta\in{\mathcal F}_X$ with $\beta\circ\beta\subseteq\alpha$ and
we have $\{t\in S:(xt,yt)\notin\beta\},\{t\in S:(yt,zt)\notin\beta\}\in\mathcal{I}$ thus
$\{t\in S:(xt,zt)\notin\alpha\}\subseteq\{t\in S:(xt,yt)\notin\beta\}\cup\{t\in S:(yt,zt)\notin\beta\}\in\mathcal{I}$
which leads to $\{t\in S:(xt,zt)\notin\alpha\}\in\mathcal{I}$. Hence $x,z$ are asymptotic modulo $\mathcal{I}$ too.
\end{note}
\section{Li--Yorke chaotic Fort transformation groups}
\noindent In this section suppose $F$ is an infinite Fort space with particular point $b$. For each $D\subseteq F$ let: \[\alpha_D:=((F\setminus D)\times(F\setminus D))\cup\{(z,z):z\in D\}
\:,\]
then
\begin{center}
$\mathcal{K}:=\{U\subseteq F\times F:$ there exists finite subset $D\subseteq F\setminus\{b\}$ with $\alpha_D\subseteq U\}$
\end{center}
is the unique compatible uniform structure of $F$.
\begin{lemma}\label{salam10}
In infinite Fort transformation group $(F,G)$ we have:
\\
1) $\{(b,x):xG{\rm \: is \: infinite}\}\cup\{(x,b):xG{\rm \: is \: infinite}\}\subseteq Prox(F,G)$.
\\
2) For
\begin{eqnarray*}
P & := & \{(x,x):x\in F\}\cup \\
& & \{(b,x):xG{\rm \: is \: infinite}\}\cup\{(x,b):xG{\rm \: is \: infinite}\}\cup  \\
& & \{(x,y):xG{\rm \: and \:} yG{\rm \: are \: infinite}\}
\end{eqnarray*}
we have $Prox(F,G)\subseteq P$.
\\
3) Moreover if $G$ is abelian too, then $Prox(F,G)=P$.
\end{lemma}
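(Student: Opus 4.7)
The plan exploits the rigidity of an infinite Fort space: since $b$ is its unique non-isolated point, every homeomorphism of $F$ must fix $b$. As each $g \in G$ acts as a homeomorphism, $bg = b$ for all $g \in G$, and consequently $xG \subseteq F \setminus \{b\}$ whenever $x \neq b$. This observation, together with the explicit description of entourages in $\mathcal{K}$ via finite sets $D \subseteq F \setminus \{b\}$, drives everything.

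For the first inclusion, I take $x \in F$ with $xG$ infinite (so necessarily $x \neq b$) and an entourage $\alpha \in \mathcal{K}$; pick a finite $D \subseteq F \setminus \{b\}$ with $\alpha_D \subseteq \alpha$. Since $|xG| = \infty > |D|$, some $g \in G$ satisfies $xg \notin D$, and then $(bg, xg) = (b, xg) \in (F \setminus D)^2 \subseteq \alpha_D \subseteq \alpha$. This places $(b, x)$ in $Prox(F, G)$, and $(x, b)$ follows by symmetry.

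For the upper bound I would argue by contrapositive. Suppose $(x, y) \in Prox(F, G)$ with $x \neq y$ and some coordinate, say $x$, is non-$b$ with $xG$ finite. Then $D := xG \cup (\{y\} \setminus \{b\})$ is a finite subset of $F \setminus \{b\}$, so $\alpha_D \in \mathcal{K}$. Any proximality witness $g$ gives $(xg, yg) \in \alpha_D$; but $xg \in D$ forces the pair onto the diagonal piece of $\alpha_D$, yielding either $xg = yg$ (hence $x = y$, contradiction) or $xg = b$ (impossible since $b$ is fixed and $x \neq b$). Applying this together with its symmetric counterpart cuts $Prox(F, G)$ down precisely to the union stated.

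The main obstacle is the equality in the abelian case, which reduces to the following: for $x \neq y$ in $F \setminus \{b\}$ with both orbits infinite, and any finite $D \subseteq F \setminus \{b\}$, one must produce $g \in G$ with $xg \notin D$ and $yg \notin D$. The bad sets $A_x := \{h \in G : xh \in D\}$ and $A_y := \{h \in G : yh \in D\}$ are each finite unions of cosets of the stabilizers $st(x), st(y)$, which have infinite index by orbit--stabilizer. The hard step, where I expect the real difficulty, is ruling out $A_x \cup A_y = G$: I would invoke B.~H.~Neumann's coset-covering lemma (a group is never a finite union of cosets of subgroups of infinite index), whose application is especially clean in the abelian setting through the common quotient $G/(st(x) \cap st(y))$. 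Any $g$ in the complement then satisfies $(xg, yg) \in (F \setminus D)^2 \subseteq \alpha_D$, closing the argument.
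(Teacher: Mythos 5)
Your proof is correct, and while the two inclusions follow essentially the paper's line, your argument for the equality is genuinely different. For the inclusions: the paper shows $(x,b)\in Prox(F,G)$ iff $x=b$ or $xG$ is infinite via $b\in\overline{xG}$, and rules out pairs with a finite non-$b$ orbit by noting that the limit $z$ of a proximality net lies in the finite (hence open) orbit $xG$, so eventually $xg_\alpha=z=yg_\alpha$ and $x=y$ --- the same contradiction you reach with $D=xG$ and the entourage $\alpha_D$ (your clause ``or $xg=b$'' is just the $y=b$ subcase; do make explicit that you are using the characterization $Prox(X,S)=\bigcap\{\alpha S^{-1}:\alpha\in\mathcal F\}$ cited from Gerko, or else index a net by the finite sets $D$ so that both orbits converge to $b$). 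For the abelian equality, the paper argues by a dichotomy on sequences: either some $\{g_n\}$ makes both $\{xg_n\}$ and $\{yg_n\}$ one-to-one, in which case both tend to $b$, or else, using commutativity, it builds $k_n$ with $xk_n$ one-to-one and $yk_n=y$, and $t_n$ with $yt_n$ one-to-one and $xt_n=x$, and derives a contradiction from the sequence $k_nt_n$. You instead note that $\{g:xg\in D\}\cup\{g:yg\in D\}$ is a finite union of cosets of $st(x)$ and $st(y)$, each of infinite index by orbit--stabilizer, and invoke B.~H.~Neumann's coset-covering lemma. This is a valid and in fact stronger route: commutativity is never used, so your argument gives equality for every infinite Fort transformation group, not only abelian ones; the cost is importing an external group-theoretic lemma (though the two-subgroup case admits a short self-contained proof), whereas the paper's argument is elementary but tied to abelianness. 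Your aside that the abelian case is ``especially clean'' via the quotient $G/(st(x)\cap st(y))$ is inessential and not really accurate --- passing to that quotient does not simplify the covering question --- but nothing in your proof depends on it.
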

\begin{proof}
First note that in the transformation group $(F,G)$ we have $bG=\{b\}$ and for all $x\in X$:
\[\overline{xG}=\left\{\begin{array}{lc} xG & xG{\rm \: is \: finite,} \\ xG\cup\{b\} & xG{\rm \: is \: infinite,}\end{array}\right.\]
also for $x\neq b$, $b\notin xG$. 
{\bf 1)} For $x\in F$ we have:
\begin{eqnarray*}
(x,b)\in Prox(F,G) & \Leftrightarrow & \exists\{g_\alpha\}_{\alpha\in\Gamma}\subseteq G\SP
{\displaystyle\lim_{\alpha\in\Gamma}xg_\alpha}={\displaystyle\lim_{\alpha\in\Gamma}bg_\alpha}=b
 \\
& \Leftrightarrow & b\in\overline{xG} \\
& \Leftrightarrow & b\in xG\vee(xG{\rm \: is \: infinite}) \\
& \Leftrightarrow & x\in bG\vee(xG{\rm \: is \: infinite}) \\
& \Leftrightarrow & x=b\vee(xG{\rm \: is \: infinite})
\end{eqnarray*}
Thus if $xG$ is infinite then $(x,b)\in Prox(F,G)$ which completes the proof of (1).
\\
{\bf 2)} Suppose $(x,y)\in Prox(F,G)$ we have the following cases:
\begin{itemize}
\item Case A. $x=b \vee y=b$. Without any loss of generality we may suppose $y=b$ and $(x,y)=(x,b)$. Using the proof of item (1), and $(x,b)\in Prox(F,G)$ we have ``$x=b\vee (xG$ is infinite$)$'' which leads to $(x,y)=(x,b)\in P$.
\item Case B. $xG$ and $yG$ are infinite. In this case it is clear that $(x,y)\in P$.
\item Case C. $x\neq b\wedge y\neq b\wedge (xG$ is finite or $yG$ is finite$)$. In this case we may suppose $x\neq b$ and $xG$ is finite. Since $(x,y)\in Prox(F,G)$, there exists a net $\{g_\alpha\}_{\alpha\in\Gamma}$ in $G$
such that ${\displaystyle\lim_{\alpha\in\Gamma}xg_\alpha}={\displaystyle\lim_{\alpha\in\Gamma}yg_\alpha}=:z$
thus $z\in \overline{xG}=xG\not\ni b$ so $z\neq b$ and $\{z\}$ is an open neighbourhood of $z$ (since $b$ is the unique limit point of $F$) and there exists $\alpha\in \Gamma$ with
$xg_\alpha=z=yg_\alpha$ which shows $x=y$ and $(x,y)=(x,x)\in P$
\end{itemize}
Using the above items we have $(x,y)\in P$ and  $Prox(F,G)\subseteq P$.
\\
{\bf 3)} Using (1) and (2) we have:
\begin{center}
$\{(b,x):xG{\rm \: is \: infinite}\}\cup\{(x,b):xG{\rm \: is \: infinite}\}\subseteq Prox(F,G)\subseteq 
\linebreak
 \{(x,x):x\in F\}\cup  \{(b,x):xG{\rm \: is \: infinite}\}\cup\{(x,b):xG{\rm \: is \: infinite}\}\cup 
  \{(x,y):xG{\rm \: and \:} yG{\rm \: are \: infinite}\}=P$
\end{center}
Suppose $G$ is abelian, in order to prove $Prox(F,G)=P$ we should prove for  $x,y\in F$ with infinite $xG,yG$ we have $(x,y)\in Prox(F,G)$. So consider $x,y\in F$ with infinte $xG,yG$. We have the following cases:
\begin{itemize}
\item Case I. There exists sequence $\{g_n\}_{n\geq1}$ in $G$ such that both sequences  $\{xg_n\}_{n\geq1}$ and $\{yg_n\}_{n\geq1}$
are one--to--one. In this case If $U$ is an open neighbourhood of $b$, then $F\setminus U$ is finite and 
there exists $N\geq1$ such that for all $n\geq N$ we have $xg_n,yg_n\in U$. Thus
${\displaystyle\lim_{n\geq1}xg_n}=b={\displaystyle\lim_{n\geq1}yg_n}$ and
$(x,y)\in Prox(F,G)$.
\item Case II. For each sequence $\{g_n\}_{n\geq1}$ in $G$ at least one of the sequences $\{xg_n\}_{n\geq1}$ or $\{yg_n\}_{n\geq1}$ is not one--to--one.
In this case using infiniteness of $xG$ there exists sequence $\{g_n\}_{n\geq1}$ in $G$ with infinite and one--to--one $\{xg_n\}_{n\geq1}$. If $\{yg_n:n\geq1\}$ is infinite, then there exists a subsequence $\{g_{n_i}\}_{i\geq1}$ with 
one--to--one $\{yg_{n_i}\}_{i\geq1}$, therefore
both sequences $\{xg_{n_i}\}_{i\geq1}$ and $\{yg_{n_i}\}_{i\geq1}$ are one--to--one which is in contradiction with our assumption. Thus 
$\{yg_n:n\geq1\}$ is finite, therefore $\{yg_n\}_{n\geq1}$ has a constant subsequence $\{yg_{n_i}\}_{i\geq1}$. 
Let $k_m:=g_{n_m}g_{n_1}^{-1}$ ($m\geq1$). 
Then for all $p,q\geq1$ we have:
\begin{eqnarray*}
xk_p=xk_q & \Rightarrow & xg_{n_p}g_{n_1}^{-1}=xg_{n_q}g_{n_1}^{-1} \\
& \Rightarrow & xg_{n_p}g_{n_1}^{-1}g_{n_1}=xg_{n_q}g_{n_1}^{-1}g_{n_1} \\
& \Rightarrow & xg_{n_p}=xg_{n_q} \\
& \Rightarrow & n_p=n_q \:(since \: \{xg_n\}_{n\geq1} \: is \: a \: one-to-one \: sequence) \\
& \Rightarrow & p=q
\end{eqnarray*}
moreover since $\{yg_{n_i}\}_{j\geq1}$ is a constant sequence, we have $yg_{n_p}=yg_{n_1}$ thus
$y=yg_{n_1}g_{n_1}^{-1}=yg_{n_p}g_{n_1}^{-1}=yk_p$.
\\
So $\{xk_n\}_{n\geq1}$ is a one--to--one sequence and for all $n\geq1$ we have $yk_n=y$.
Similarly there exists a sequence $\{t_n\}_{n\geq1}$ in $G$ such that
$\{yt_n\}_{n\geq1}$ is a one--to--one sequence and  $xt_n=x$ ($n\geq1$).
\\
For all $n\geq1$ we have $xk_n t_n=xt_n k_n=x k_n$ and $yk_n t_n=yt_n$, therefore
both sequences:
\[\{xk_nt_n\}_{n\geq1} (=\{xk_n\}_{n\geq1})\:\:{\rm and}\:\: \{xk_nt_n\}_{n\geq1}  (=\{yt_n\}_{n\geq1})\]
are one--to--one and infinite sequences which is in contradiction
with our assumption on $x,y$, hence this case would have not been occured.
\end{itemize}
Using the above discussion for abelian $G$ we have $(x,y)\in Prox(F,G)$ which completes the proof of (3).
\end{proof}
\begin{lemma}\label{salam20}
In infinite Fort transformation group $(F,G)$ for $x,y\in F$
and ideal $\mathcal I$ on $G$,
the following statements are equivalent:
\begin{itemize}
\item[1.] $(x,y)\in  Asym_{\mathcal I}(F,G)$,
\item[2.] for all finite subset $D$ of $F\setminus\{b\}$, we have
$\{g\in G:(xg,yg)\notin\alpha_D\}\in{\mathcal I}$,
\item[3.] for all $z\in F\setminus\{b\}$ we have
$\{g\in G:(xg,yg)\notin\alpha_{\{z\}}\}\in{\mathcal I}$.
\end{itemize}
\end{lemma}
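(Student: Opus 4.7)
The plan is to establish the cycle $(1) \Rightarrow (2) \Rightarrow (3) \Rightarrow (2) \Rightarrow (1)$, with the combinatorial heart being the equivalence $(2) \Leftrightarrow (3)$ and the remaining implications just unwinding the description of $\mathcal{K}$ recalled right before the statement.

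First, $(1) \Rightarrow (2)$ is immediate: each $\alpha_D$ with $D$ a finite subset of $F \setminus \{b\}$ is itself an element of $\mathcal{K}$ (take $D$ as the witness in the definition of $\mathcal{K}$), so specializing the asymptoticity condition to $\alpha = \alpha_D$ gives (2). The implication $(2) \Rightarrow (3)$ is trivial by taking $D = \{z\}$. For $(2) \Rightarrow (1)$, I would use the explicit form of $\mathcal{K}$: any entourage $\alpha \in \mathcal{K}$ contains some $\alpha_D$ with $D$ finite in $F \setminus \{b\}$, so $\{g \in G : (xg,yg) \notin \alpha\} \subseteq \{g \in G : (xg,yg) \notin \alpha_D\}$, and closure of $\mathcal I$ under subsets promotes the containment in $\mathcal I$ provided by (2) to the entourage $\alpha$.

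The real work is $(3) \Rightarrow (2)$. The key combinatorial identity is
\[
\alpha_{D_1} \cap \alpha_{D_2} = \alpha_{D_1 \cup D_2},
\]
which I would verify directly: if $(u,v)$ lies in both $\alpha_{D_1}$ and $\alpha_{D_2}$ with $u \neq v$, then in each $\alpha_{D_i}$ it must belong to the $(F \setminus D_i) \times (F \setminus D_i)$ component (since the diagonal component requires $u = v$), so $u, v \notin D_1 \cup D_2$; the reverse containment is equally direct by splitting into the cases $u = v \in D_1 \cup D_2$ and $u, v \notin D_1 \cup D_2$. Iterating, for $D = \{z_1, \ldots, z_n\}$ we obtain $\alpha_D = \bigcap_{i=1}^{n} \alpha_{\{z_i\}}$, and consequently
\[
\{g \in G : (xg,yg) \notin \alpha_D\} \;=\; \bigcup_{i=1}^{n}\{g \in G : (xg,yg) \notin \alpha_{\{z_i\}}\}.
\]
Since each term on the right lies in $\mathcal I$ by hypothesis (3), and $\mathcal I$ is closed under finite unions, the left-hand side belongs to $\mathcal I$, giving (2).

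The only delicate step is the combinatorial identity for $\alpha_{D_1} \cap \alpha_{D_2}$; once that is in hand, the rest is a mechanical application of the two defining properties of an ideal (closure under finite unions and under taking subsets) together with the description of $\mathcal{K}$ given just before the lemma.
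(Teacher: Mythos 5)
Your proof is correct and follows essentially the same route as the paper: the paper settles (1)$\Leftrightarrow$(2) by the definition of $\mathcal{K}$ together with closure of $\mathcal{I}$ under subsets, and (2)$\Leftrightarrow$(3) by exactly the identity $\alpha_D=\bigcap_{z\in D}\alpha_{\{z\}}$ that you obtain from $\alpha_{D_1}\cap\alpha_{D_2}=\alpha_{D_1\cup D_2}$. The only cosmetic point is the case $D=\varnothing$ (allowed in statement (2)), where $\alpha_\varnothing=F\times F$ makes the relevant subset of $G$ empty and hence automatically in $\mathcal{I}$, which is why the paper states its identity only for nonempty finite $D$.
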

\begin{proof}
``(1)$\Leftrightarrow$(2)'' Use definition.
\\
``(2)$\Leftrightarrow$(3)'' Use the fact that for all nonempty finite subset $D$ of $F\setminus\{b\}$
we have $\alpha_D=\mathop{\bigcap}\limits_{z\in D}\alpha_{\{z\}}$.
\end{proof}
\begin{theorem}\label{salam30}
In infinite Fort transformation group $(F,G)$ with ideal
$\mathcal I$ on $G$ we have:
\begin{eqnarray*}
Asym_{\mathcal I}(F,G) & =  & \{(x,x):x\in F\}\cup \\
& &
 \{(x,y)\in F\times F:\forall h\in G\SP stab(x)h\cup stab(y)h\in{\mathcal I}\} \cup \\
& & [\{(x,b)\in F\times F: \forall h\in G\SP stab(x)h\in{\mathcal I}\} \cup \\
& & \{(b,y)\in F\times F: \forall h\in G\SP stab(y)h\in{\mathcal I}\} \:.
\end{eqnarray*}
\end{theorem}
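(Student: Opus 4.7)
The plan is to apply Lemma~\ref{salam20}(3), which reduces $(x,y)\in Asym_{\mathcal I}(F,G)$ to the single requirement that $\{g\in G:(xg,yg)\notin\alpha_{\{z\}}\}\in\mathcal I$ for every $z\in F\setminus\{b\}$. Since $b$ is fixed by $G$ and any $x\neq b$ satisfies $xG\subseteq F\setminus\{b\}$ (if $xh=b$ then $x=bh^{-1}=b$), the analysis splits cleanly into the cases (i) $x=y$, (ii) exactly one of $x,y$ equals $b$, and (iii) $x,y\in F\setminus\{b\}$ with $x\neq y$.

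The key algebraic translation is that for any $p\in F$ and $z\in F\setminus\{b\}$ the set $\{g\in G:pg=z\}$ is empty if $z\notin pG$ and equals the right coset $st(p)h$ whenever $z=ph$. In case (ii) with $y=b$, the unfolding of $\alpha_{\{z\}}$ gives $(xg,b)\notin\alpha_{\{z\}}$ iff $xg=z$, so the bad set is $\{g:xg=z\}$. Requiring this to be in $\mathcal I$ for every $z\in F\setminus\{b\}$, i.e.\ for every $z\in xG$, is by the coset observation equivalent to $st(x)h\in\mathcal I$ for every $h\in G$; this places $(x,b)$ in the third set, and $(b,y)$ is symmetric.

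In case (iii), since $x\neq y$ the equality $xg=yg$ never holds, so $\{g:xg=z\}$ and $\{g:yg=z\}$ are disjoint and their union is exactly $\{g:(xg,yg)\notin\alpha_{\{z\}}\}$. Requiring this union to lie in $\mathcal I$ for every $z\in F\setminus\{b\}$ is, using that $\mathcal I$ is closed under finite unions and subsets, equivalent to the pair of conditions $\forall h\in G\ st(x)h\in\mathcal I$ and $\forall h\in G\ st(y)h\in\mathcal I$, which collapse to the single statement $\forall h\in G\ st(x)h\cup st(y)h\in\mathcal I$; this places $(x,y)$ in the second set. Case (i) contributes only the diagonal, which is manifestly contained in $Asym_{\mathcal I}$.

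Each direction is obtained by reading this analysis forward and backward. The one bookkeeping point I expect to be the main subtlety is the re-indexing in case (iii): the condition that comes out of the bad-set computation naturally reads ``for each $z\in xG\cap yG$ with $xh=yk=z$, $st(x)h\cup st(y)k\in\mathcal I$,'' and one must notice that allowing $h,k$ to range independently over $G$ (as $z$ runs through the full orbits) is equivalent to the common-$h$ formulation $st(x)h\cup st(y)h\in\mathcal I$ for all $h$. Degenerate overlaps between the four sets on the right-hand side (such as when $\mathcal I=\mathcal P(G)$, in which case every pair is asymptotic and all four sets collapse to $F\times F$) cause no difficulty because the theorem asserts only an equality of unions.
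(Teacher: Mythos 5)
Your proposal is correct and follows essentially the same route as the paper: reduce via Lemma~\ref{salam20}(3) to the condition on the entourages $\alpha_{\{z\}}$, identify each bad set $\{g\in G:pg=z\}$ with the right coset $st(p)h$ when $z=ph$ (and with $\varnothing$ otherwise), and run the case analysis for $(x,b)$, $(b,y)$, and distinct $x,y\neq b$, using that $x\neq y$ forces $xg\neq yg$ and that closure of $\mathcal I$ under subsets and finite unions decouples and recombines the two coset conditions. The paper's proof is exactly this chain of equivalences, so no further comparison is needed.
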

\begin{proof}
First note that:
\begin{itemize}
\item[$(\circledast)$] for $w\in F\setminus\{b\}$  and $z\in F\setminus wG$ we have $\{g\in G: wg=z\}=\varnothing\in\mathcal{I}$ \\
also $b\notin wG$.
\end{itemize}
For $x,y\in F\setminus\{b\}$ with $x\neq y$ we have:
\begin{eqnarray*}
(x,b)\in Asym_{\mathcal I}(F,G)
	& \Leftrightarrow & (\forall z\in F\setminus\{b\}\:
	(\{g\in G:(xg,bg)\notin\alpha_{\{z\}}\}\in\mathcal{I})) \\
& \Leftrightarrow & (\forall z\in F\setminus\{b\}\:
	(\{g\in G:(xg,b)\notin\alpha_{\{z\}}\}\in\mathcal{I})) \\
& \Leftrightarrow & (\forall z\in F\setminus\{b\}\:
	(\{g\in G:xg=z\}\in\mathcal{I})) \\
& \mathop{\Leftrightarrow}\limits^{(\circledast)} & (\forall z\in xG \:
	(\{g\in G:xg=z\}\in\mathcal{I})) \\
& \Leftrightarrow & (\forall h\in G\:
	(\{g\in G:xg=xh\}\in\mathcal{I})) \\
& \Leftrightarrow & (\forall h\in G\:
	(\{g\in G:gh^{-1}\in stab(x)\}\in\mathcal{I})) \\
& \Leftrightarrow & (\forall h\in G\:
	(stab(x)h\in\mathcal{I}))
\end{eqnarray*}
Also:
\\
{\small
$(x,y)\in Asym_{\mathcal I}(F,G)$
\begin{eqnarray*}
& \Leftrightarrow & (\forall z\in F\setminus\{b\}\:
	(\{g\in G:(xg,yg)\notin\alpha_{\{z\}}\}\in\mathcal{I})) \\
& \Leftrightarrow & (\forall z\in F\setminus\{b\}\:
	(\{g\in G:xg=z\wedge yg\neq z\}\cup
	\{g\in G:xg\neq z\wedge yg= z\}\in\mathcal{I})) \\
& \Leftrightarrow & (\forall z\in F\setminus\{b\}\:
	(\{g\in G:xg=z\}\cup
	\{g\in G:yg=z\}\in\mathcal{I})) \\
& \Leftrightarrow & (\forall z\in F\setminus\{b\}\:
	(\{g\in G:xg=z\}\in\mathcal{I}\wedge
	\{g\in G:yg=z\}\in\mathcal{I})) \\
& \Leftrightarrow & ((\forall z\in F\setminus\{b\}\:
	\{g\in G:xg=z\}\in\mathcal{I})\wedge
	(\forall z\in F\setminus\{b\}\:
	\{g\in G:yg=z\}\in\mathcal{I})) \\
& \mathop{\Leftrightarrow}\limits^{(\circledast)} & ((\forall z\in xG\:
	\{g\in G:xg=z\}\in\mathcal{I})\wedge
	(\forall z\in yG\:
	\{g\in G:yg=z\}\in\mathcal{I})) \\
& \Leftrightarrow & ((\forall h\in G\:
	\{g\in G:xg=xh\}\in\mathcal{I})\wedge
	(\forall h\in G\:
	\{g\in G:yg=yh\}\in\mathcal{I})) \\
& \Leftrightarrow & ((\forall h\in G\:
	stab(x)h\in\mathcal{I})\wedge
	(\forall h\in G\:
	stab(y)h\in\mathcal{I})) \\
& \Leftrightarrow & (\forall h\in G\:
	stab(x)h\cup	stab(y)h\in\mathcal{I})
\end{eqnarray*}
}
\end{proof}
\noindent In semigroup $S$ we say ideal $\mathcal{I}$ on $S$ is
$S-$invariant, if for all $A\in \mathcal{I}$ and $s\in S$ we have $As\in\mathcal{I}$.
So in semigroup $S$, $\mathcal{P}_{fin}(S)$ is an $S-$invariant ideal on $S$
(however for nontrivial $S$ with identity $e$, ideal $\{\{e\},\varnothing\}$
on $S$ is not $S-$invariant).
\begin{corollary}\label{salam300}
In infinite Fort transformation group $(F,G)$ with $G-$invariant ideal
$\mathcal I$ on $G$.
Then

$Asym_{\mathcal I}(F,G) =  \{(x,x):x\in F\}\cup
 \{(x,y)\in F\times F: stab(x)\cup stab(y)\in{\mathcal I}\} \cup $
 \[\{(x,b)\in F\times F: stab(x)\in{\mathcal I}\} \cup \{(b,y)\in F\times F:  stab(y)\in{\mathcal I}\} \:.\]
And:
\begin{eqnarray*}
Asym(F,G) & = &  \{(x,x):x\in F\}\cup \\
& & \{(x,y)\in F\times F: stab(x)\cup stab(y){\rm \: is \: finite}\} \cup \\
& & \{(x,b)\in F\times F: stab(x){\rm \: is \: finite}\} \cup \\
& & \{(b,y)\in F\times F: stab(y){\rm \: is \: finite}\} \:.
\end{eqnarray*}
\end{corollary}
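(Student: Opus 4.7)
The plan is to obtain this corollary as a direct specialization of Theorem~\ref{salam30}, where the simplification is driven entirely by $G$-invariance of $\mathcal I$. First I would establish the elementary reduction that for any subset $A\subseteq G$, the condition ``$\forall h\in G\:\:Ah\in\mathcal I$'' collapses to the single condition $A\in\mathcal I$: one direction follows by taking $h=e$ (since $Ae=A$), and the other is the definition of $G$-invariance.

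Applying this reduction termwise to each of the four cases appearing in Theorem~\ref{salam30} gives the first claimed identity almost automatically. For the case $(x,b)$ the condition $\forall h\in G\:\:st(x)h\in\mathcal I$ becomes $st(x)\in\mathcal I$, and the case $(b,y)$ is symmetric. For the case $(x,y)$ with $x,y\ne b$, I would use the trivial distributivity $(st(x)\cup st(y))h=st(x)h\cup st(y)h$ to rewrite the condition as $\forall h\in G\:\:(st(x)\cup st(y))h\in\mathcal I$, which by the reduction collapses to $st(x)\cup st(y)\in\mathcal I$. This matches the second set in the corollary.

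For the second identity, the only extra observation required is that $\mathcal P_{\rm fin}(G)$ is itself $G$-invariant---any finite set multiplied on the right by an element of $G$ remains finite---so the first identity applies with $\mathcal I=\mathcal P_{\rm fin}(G)$. Substituting ``is finite'' for ``$\in\mathcal P_{\rm fin}(G)$'' in each of the four case conditions yields the stated formula for $Asym(F,G)$ verbatim.

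Since the substantive content has already been carried out in Theorem~\ref{salam30}, there is no real obstacle; the proof is essentially bookkeeping. The only point requiring a moment of care is making sure the distributive step $(st(x)\cup st(y))h=st(x)h\cup st(y)h$ is invoked before the $G$-invariance reduction, rather than trying (incorrectly) to deduce $st(x),st(y)\in\mathcal I$ separately from the joint quantified condition in Theorem~\ref{salam30}.
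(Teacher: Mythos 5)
Your proposal is correct and follows essentially the same route as the paper, which derives the corollary directly from Theorem~\ref{salam30}: the reduction ``$\forall h\in G\:(Ah\in\mathcal I)\iff A\in\mathcal I$'' (via $h=e$ and $G$-invariance), together with $(st(x)\cup st(y))h=st(x)h\cup st(y)h$ and the $G$-invariance of $\mathcal P_{\rm fin}(G)$, is exactly the bookkeeping the paper leaves implicit.
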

\begin{proof}
Use Theorem~\ref{salam30},
\end{proof}
\begin{theorem}\label{salam40}
In infinite Fort transformation group $(F,G)$ suppose
$\mathcal I$ is an ideal on $S$, then:

$Prox(F,G)\setminus Asym_{\mathcal{I}}(F,G)$
\begin{eqnarray*}
& \subseteq &
	\{(x,b)\in F\times F:xG{\rm \: is \: infinite \: and \: exists\:}
	h\in G{\rm \: with \:} stab(x)h\notin\mathcal{I} \} \cup \\
	& & \{(b,x)\in F\times F:xG{\rm \: is \: infinite \: and \: exists\:}
	h\in G{\rm \: with \:} stab(x)h\notin\mathcal{I}\} \cup \\
	& & \{(x,y)\in F\times F:xG,yG{\rm \: are \: infinite \: and \: exists\:}
	h\in G{\rm \: with \:} stab(x)h\cup stab(y)h\notin\mathcal{I}\} \:.
\end{eqnarray*}
So if $\mathcal J$ is a $G-$invariant ideal on $G$, then:

$Prox(F,G)\setminus Asym_{\mathcal{J}}(F,G)$
\begin{eqnarray*}
& \subseteq &
	\{(x,b)\in F\times F:xG{\rm \: is \: infinite \: and \:} stab(x)\notin\mathcal{J} \} \cup \\
	& & \{(b,x)\in F\times F:xG{\rm \: is \: infinite \: and \:} stab(x)\notin\mathcal{J}\} \cup \\
	& & \{(x,y)\in F\times F:xG,yG{\rm \: are \: infinite \: and \:} stab(x)\cup stab(y)\notin\mathcal{J}\} \:.
\end{eqnarray*}
In particular:
\begin{eqnarray*}
Prox(F,G)\setminus Asym(F,G) & \subseteq &
	\{(x,b)\in F\times F:xG,stab(x){\rm \: are \: infinite}\} \cup \\
	& & \{(b,x)\in F\times F:xG,stab(x){\rm \: are \: infinite}\} \cup \\
	& & \{(x,y)\in F\times F:xG,yG,stab(x)\cup stab(y){\rm \: are \: infinite}\} \:.
\end{eqnarray*}
If $G$ is abelian too, we have equality in all of the above relations.
\end{theorem}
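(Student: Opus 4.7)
The strategy is to subtract the two characterizations already in hand: Lemma~\ref{salam10}, which bounds $Prox(F,G)$ above (with equality when $G$ is abelian), and Theorem~\ref{salam30}, which describes $Asym_{\mathcal{I}}(F,G)$ exactly. The theorem is essentially a ``Lemma~\ref{salam10} minus Theorem~\ref{salam30}'' statement, so the proof should be a bookkeeping exercise rather than a new argument.

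First I would establish the general inclusion. Take $(x,y)\in Prox(F,G)\setminus Asym_{\mathcal{I}}(F,G)$. Lemma~\ref{salam10} places $(x,y)$ in one of four blocks: the diagonal, $\{b\}\times\{z:zG\ \text{infinite}\}$, its flip $\{z:zG\ \text{infinite}\}\times\{b\}$, or the set of pairs with both orbits infinite. Theorem~\ref{salam30} contains all diagonal pairs in $Asym_{\mathcal{I}}(F,G)$, so the diagonal block is excluded. For each of the three remaining blocks Theorem~\ref{salam30} supplies a necessary and sufficient condition for membership in $Asym_{\mathcal{I}}(F,G)$; the negations of these conditions are precisely the ``there exists $h\in G$ with $st(x)h\notin\mathcal{I}$'', ``there exists $h\in G$ with $st(y)h\notin\mathcal{I}$'', and ``there exists $h\in G$ with $st(x)h\cup st(y)h\notin\mathcal{I}$'' clauses appearing in the first display.

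Next, for a $G$-invariant ideal $\mathcal{J}$, I would note that right multiplication by any $h\in G$ is a bijection of $G$ with inverse right multiplication by $h^{-1}$, and $G$-invariance means it preserves $\mathcal{J}$ in both directions; hence $st(x)h\in\mathcal{J}\Leftrightarrow st(x)\in\mathcal{J}$ for every $h\in G$. This collapses the $\exists h$ quantifier and yields the second display. The special case $\mathcal{J}=\mathcal{P}_{\rm fin}(G)$ is immediate since right multiplication preserves finiteness, or alternatively one may cite Corollary~\ref{salam300} directly.

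Finally, for the equality claim when $G$ is abelian, I would invoke the reverse inclusion portion of Lemma~\ref{salam10}: every pair in the right-hand side has the relevant orbit(s) infinite, and the abelian case of Lemma~\ref{salam10} certifies that such a pair lies in $Prox(F,G)$. Combined with Theorem~\ref{salam30} in the contrapositive direction (the stabilizer hypothesis on the right-hand side is exactly the negation of the characterization of $Asym_{\mathcal{I}}(F,G)$), we obtain the reverse inclusion, hence equality. The only delicate point is the bookkeeping: the three subcases ($y=b$, $x=b$, and $x,y\neq b$) must be matched consistently between Lemma~\ref{salam10} and Theorem~\ref{salam30}, so that each negated clause lands in the correct block; once that alignment is made explicit the rest is formal.
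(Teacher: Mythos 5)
Your proposal is correct and follows essentially the same route as the paper, whose proof of Theorem~\ref{salam40} consists precisely of combining Lemma~\ref{salam10}, Theorem~\ref{salam30} and Corollary~\ref{salam300}; you simply spell out the case analysis, the $G$-invariance collapse $st(x)h\in\mathcal{J}\Leftrightarrow st(x)\in\mathcal{J}$, and the reverse inclusion for abelian $G$ that the paper leaves implicit. No gap beyond the same (harmless, inherited-from-the-statement) bookkeeping about diagonal pairs that the paper itself glosses over.
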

\begin{proof}
Use Lemmas \ref{salam10}, \ref{salam30} and Corollary \ref{salam300}.
\end{proof}
\begin{corollary}\label{salam50}
In infinite Fort transformation group $(F,G)$, for $S\subseteq F$
we have:
\\
1. if $S$ is an scrambled subset of $F$ module ideal $\mathcal I$ on $G$, then
\[S\setminus(\{x\in F:xG{\rm \: is \: infinite\: and \: there \: exists\:}h\in G{\rm \: with \:} stab(x)h\notin\mathcal{I}\}\cup\{b\})\]
has at most one element.
\\
2. if $S$ is an scrambled subset of $F$ modulo ideal $\mathcal J$ on $G$ and $\mathcal J$ is $G-$invariant, then
\[S\setminus(\{x\in F:xG{\rm \: is \: infinite\: and \:}stab(x)\notin\mathcal{I}\}\cup\{b\})\]
has at most one element.
\\
3. if $S$ is an scrambled subset of $F$, then
\[S\setminus(\{x\in F:xG, stab(x){\rm \: are \: infinite\:}\}\cup\{b\})\]
has at most one element.
\end{corollary}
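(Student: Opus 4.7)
The plan is to argue by contradiction, reducing everything to Theorem~\ref{salam40} together with the defining closure of an ideal under finite unions. Fix an ideal $\mathcal{I}$ on $G$ and abbreviate
\[
B_{\mathcal{I}}:=\{x\in F:xG{\rm \: is \: infinite \: and \: exists \:}h\in G{\rm \: with \:}st(x)h\notin\mathcal{I}\}\cup\{b\},
\]
i.e.\ the set being subtracted from $S$ in item~(1). Suppose, toward a contradiction, that $S\setminus B_{\mathcal{I}}$ contains two distinct points $x$ and $y$; then neither $x$ nor $y$ equals $b$. Since $S$ is scrambled modulo $\mathcal{I}$, the pair $(x,y)$ lies in $Prox(F,G)\setminus Asym_{\mathcal{I}}(F,G)$, so by the last clause of the first inclusion in Theorem~\ref{salam40} (neither coordinate is $b$) both $xG$ and $yG$ must be infinite and there exists some $h\in G$ with $st(x)h\cup st(y)h\notin\mathcal{I}$.

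The contradiction is then immediate. Because $x\notin B_{\mathcal{I}}$ while $xG$ is infinite, the definition of $B_{\mathcal{I}}$ forces $st(x)h'\in\mathcal{I}$ for every $h'\in G$; the identical argument for $y$ yields $st(y)h'\in\mathcal{I}$ for every $h'\in G$. Specialising to the particular $h$ produced above and using that ideals are closed under finite unions gives $st(x)h\cup st(y)h\in\mathcal{I}$, contradicting what Theorem~\ref{salam40} furnished. Hence $S\setminus B_{\mathcal{I}}$ has at most one element, proving~(1).

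For (2), I would run the same three-step scheme (assume two distinct points outside the removed set, invoke Theorem~\ref{salam40}, combine via finite unions), but with the $G$-invariant version of Theorem~\ref{salam40}, where each clause ``$\exists h\; st(\cdot)h\notin\mathcal{J}$'' simplifies to ``$st(\cdot)\notin\mathcal{J}$''. Item~(3) is then the specialisation of (2) to $\mathcal{J}=\mathcal{P}_{\rm fin}(G)$, which is $G$-invariant, upon noting that $st(x)\notin\mathcal{P}_{\rm fin}(G)$ is literally the statement ``$st(x)$ is infinite''. I do not anticipate a genuine obstacle here: all the heavy lifting has already been done in Theorem~\ref{salam40}, so the only point requiring any care is the elementary observation that an ideal absorbs finite unions, which is precisely what turns ``$st(x)h\in\mathcal{I}$ and $st(y)h\in\mathcal{I}$'' into the contradicting statement ``$st(x)h\cup st(y)h\in\mathcal{I}$''.
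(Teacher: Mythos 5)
Your proof is correct and follows the same route as the paper, which simply cites Theorem~\ref{salam40}: you spell out the intended argument that a scrambled pair off the exceptional set would force, via the last clause of the inclusion in Theorem~\ref{salam40}, some $h$ with $st(x)h\cup st(y)h\notin\mathcal{I}$, contradicting closure of the ideal under finite unions. The reductions of (2) and (3) to the $G$-invariant and $\mathcal{P}_{\rm fin}(G)$ cases are likewise exactly what the paper intends.
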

\begin{proof}
Use Lemma~\ref{salam40}.
\end{proof}
\begin{theorem}\label{salam60}
Abelian infinite Fort transformation group $(F,G)$ is
\\
1. Li--Yorke chaotic modulo ideal $\mathcal I$ on $G$
if and only if
$H:=\{x\in F:xG$ is infinite and there exists $h\in G$ with $stab(x)h\notin\mathcal{I}\}$
is uncountable.
\\
2. Li--Yorke chaotic modulo $G-$invariant ideal $\mathcal J$ on $G$
if and only if
$H:=\{x\in F:xG$ is infinite and $stab(x)\notin\mathcal{J}\}$
is uncountable.
\\
3. Li--Yorke chaotic if and only if
$H:=\{x\in F:xG,stab(x)$ are infinite$\}$
is uncountable.
\end{theorem}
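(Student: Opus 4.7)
The plan is to derive the theorem almost mechanically from the structural results already in hand: Lemma~\ref{salam10} (which pins down $Prox(F,G)$ exactly when $G$ is abelian), Theorem~\ref{salam40} (which does the same for $Prox(F,G)\setminus Asym_{\mathcal I}(F,G)$ under abelianness), and Corollary~\ref{salam50} (which bounds the size of a scrambled set away from the relevant $H$). All three parts follow the same two-step schema, so I would handle part~(1) in full detail and then indicate part~(2) as the $G$-invariant specialization and part~(3) as the further specialization to $\mathcal{I}=\mathcal{P}_{\rm fin}(G)$.

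For the ``only if'' direction of~(1), assume $(F,G)$ is Li--Yorke chaotic modulo $\mathcal{I}$ and pick an uncountable scrambled set $S\subseteq F$. By Corollary~\ref{salam50}(1),
\[
S\setminus\bigl(H\cup\{b\}\bigr)
\]
has at most one element, where $H$ is the set in the theorem statement. Since $S$ is uncountable and $H\cup\{b\}$ differs from $H$ by at most one point, $H$ is uncountable.

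For the ``if'' direction, assume $H$ is uncountable; I claim $H$ itself is a scrambled set modulo $\mathcal{I}$, which suffices. Fix distinct $x,y\in H$. Both $xG$ and $yG$ are infinite, so by the equality version of Lemma~\ref{salam10} (valid because $G$ is abelian), $(x,y)\in Prox(F,G)$. To show $(x,y)\notin Asym_{\mathcal I}(F,G)$, I would invoke the equality version of Theorem~\ref{salam40} for abelian $G$: it suffices to produce an $h\in G$ with $st(x)h\cup st(y)h\notin\mathcal{I}$. By definition of $H$ there exists $h\in G$ with $st(x)h\notin\mathcal{I}$; since $st(x)h\subseteq st(x)h\cup st(y)h$ and $\mathcal{I}$ is downward closed, the same $h$ works. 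Hence every pair of distinct points of $H$ lies in $Prox(F,G)\setminus Asym_{\mathcal I}(F,G)$, so $H$ is an uncountable scrambled set.

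Part~(2) is the same argument with Corollary~\ref{salam300} replacing the general-ideal description: for $G$-invariant $\mathcal{J}$ the condition ``$\exists h\:st(x)h\notin\mathcal{J}$'' collapses to ``$st(x)\notin\mathcal{J}$'', and the union trick $st(x)\subseteq st(x)\cup st(y)$ still works. Part~(3) is the specialization $\mathcal{J}=\mathcal{P}_{\rm fin}(G)$, which is $G$-invariant, and where $st(x)\notin\mathcal{P}_{\rm fin}(G)$ means exactly that $st(x)$ is infinite. I do not expect any genuine obstacle: the only step requiring a moment's thought is verifying that a single $h$ can serve for both $x$ and $y$ in the non-asymptoticity witness, and this is resolved by monotonicity of $\mathcal I$ as noted above.
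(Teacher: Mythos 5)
Your proposal is correct and follows essentially the same route as the paper: the ``only if'' direction via Corollary~\ref{salam50} bounding a scrambled set away from $H\cup\{b\}$, and the ``if'' direction by showing $H$ itself is scrambled using the abelian-case equalities of Lemma~\ref{salam10} and Theorem~\ref{salam40} (equivalently Theorem~\ref{salam30}). Your explicit observation that one witness $h$ with $st(x)h\notin\mathcal{I}$ suffices for the pair, by downward closure of $\mathcal{I}$, is a detail the paper leaves implicit but is exactly what its argument needs.
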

\begin{proof}
If $(F,G)$ is  Li--Yorke chaotic, then it has an uncountable
scrambled subset say $S$, by Corollary~\ref{salam50},
$S\setminus H$ is finite, so $H$ is uncountable.
\\
For infinite $H$ and abelian $G$, $H$ is an scrambled subset of $F$ by
Lemma~\ref{salam40}. So if $H$ is uncountable, then
$(F,G)$ is  Li--Yorke chaotic.
\end{proof}
\subsection*{Co--decompositions of $(F,G)$ and Li--Yorke chaos}
Now in our final notes in this section for infinite abelian group $G$, we pay attention to co--decompasability of
$(F,G)$ to Li--Yorke chatic transformation groups and co--decompasability of
$(F,G)$ to non--Li--Yorke chatic transformation groups.
\begin{corollary}\label{salam70}
In infinite abelian Fort transformation group $(F,G)$, is Li--Yorke
chaotic (modulo ideal $\mathcal I$ (on $G$)) if and only if it is co-decomposible to
Li--Yorke chaotic (modulo ideal $\mathcal I$) transformation groups.
\end{corollary}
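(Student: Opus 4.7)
The plan is to reduce both implications to the characterization in Theorem~\ref{salam60}(1), which says that for an abelian infinite Fort transformation group $(F,G)$ and an ideal $\mathcal I$ on $G$, Li--Yorke chaoticity modulo $\mathcal I$ is equivalent to the uncountability of the set
\[ H(F,G,\mathcal I) := \{x\in F : xG \text{ is infinite and } \exists h\in G \text{ with } st(x)h\notin\mathcal I\}. \]
The forward implication is immediate: if $(F,G)$ is Li--Yorke chaotic modulo $\mathcal I$, then the trivial single--factor co--decomposition $(F,(G_\alpha;\alpha\in\Gamma))$ with $\Gamma$ a singleton and $G_0=G$ is a co--decomposition whose unique factor is $(F,G)$ itself, already Li--Yorke chaotic modulo $\mathcal I$.

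For the nontrivial direction I would argue as follows. Assume $(F,(G_\alpha;\alpha\in\Gamma))$ is a co--decomposition of $(F,G)$ such that each $(F,G_\alpha)$ is Li--Yorke chaotic modulo the naturally restricted ideal $\mathcal I_\alpha := \mathcal I\cap\mathcal P(G_\alpha)$ on $G_\alpha$. Fix any such $\alpha$. Since $G$ is abelian, $G_\alpha$ is an abelian subgroup, so $(F,G_\alpha)$ is again an abelian infinite Fort transformation group and Theorem~\ref{salam60}(1) applies, yielding that $H(F,G_\alpha,\mathcal I_\alpha)$ is uncountable. The crux is then the inclusion
\[ H(F,G_\alpha,\mathcal I_\alpha) \subseteq H(F,G,\mathcal I), \]
from which uncountability of the latter, and hence Li--Yorke chaoticity of $(F,G)$ modulo $\mathcal I$ via another appeal to Theorem~\ref{salam60}(1), follows.

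The inclusion rests on two routine monotonicity observations. First, $G_\alpha\subseteq G$ gives $xG_\alpha\subseteq xG$, so infiniteness of the smaller orbit forces infiniteness of the larger. Second, $st_{G_\alpha}(x) = st(x)\cap G_\alpha \subseteq st(x)$, so for any $h\in G_\alpha$ we have $st_{G_\alpha}(x)h \subseteq st(x)h$; since ideals are closed under taking subsets, $st(x)h\in\mathcal I$ would force $st_{G_\alpha}(x)h\in\mathcal I_\alpha$, and contrapositively $st_{G_\alpha}(x)h\notin\mathcal I_\alpha$ yields $st(x)h\notin\mathcal I$ with the same witness $h\in G_\alpha\subseteq G$. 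I expect no real obstacle beyond pinning down the interpretation of ``Li--Yorke chaoticity modulo $\mathcal I$'' for a subgroup $G_\alpha$; the restriction $\mathcal I\cap\mathcal P(G_\alpha)$ is the only reasonable meaning, and once that convention is fixed, the corollary is essentially a monotonicity statement sitting on top of Theorem~\ref{salam60}.
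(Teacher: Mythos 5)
Your argument is correct and matches the paper's intent: the paper's proof is simply ``Use Theorem~\ref{salam60}'', and your reduction of the nontrivial direction to the inclusion $H(F,G_\alpha,\mathcal I\cap\mathcal P(G_\alpha))\subseteq H(F,G,\mathcal I)$ together with the trivial single--factor co--decomposition for the converse is exactly the way that theorem is meant to be applied (note the paper's definition of co--decomposition indeed permits the singleton family $\{G\}$, so no further construction is needed).
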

\begin{proof}
Use Theorem~\ref{salam60}.
\end{proof}
\begin{note}
Every infinite abelian Fort transformation group $(F,G)$, is co-decomposible to
non--Li--Yorke chaotic transformation groups.
\end{note}
\begin{proof}
Suppose $(F,G)$ is an abelian Fort transformation group, then
for $\{G_\alpha:\alpha\in\Gamma\}=\{\{g^n:n\in{\mathbb Z}\}:g\in G\}$
with distinct $G_\alpha$s,
$(F,(G_\alpha:\alpha\in\Gamma))$ is a co--decomposition of
$(F,G)$ to non--Li--Yorke chaotic transformation groups.
\end{proof}
\begin{example}
For uncountable $G$ let $\mathcal{P}_{count}(G)=\{A\subseteq G:A$ is countable$\}$. Now for
$G=\mathbb{Z}\times{\mathbb R}$ and Fort space $F:={\mathbb R}\cup\{\infty\}$
with particular point $\infty$, in transformation groug $(F,G)$ with
$\infty(n,r):=\infty$ and
$x(n,r):=x+r$
$(x\in\mathbb{R},(n,r)\in \mathbb{Z}\times{\mathbb R}$
we have:
\\
1. $xG=\mathbb{R}$ for all $x\in F\setminus\{\infty\}$,
\\
2. $stab(x)=\mathbb{Z}\times\{0\}$ for all $x\in F\setminus\{\infty\}$.
\\
So by Theorem~\ref{salam60}, $(F,G)$ is Li--Yorke chaotic (modulo
$\mathcal{P}_{fin}(G)$) however it is not Li--Yorke chaotic
modulo $\mathcal{P}_{count}(G)$.
\\ $\:$ \\
As a matter of fact for transfinite cardinal numbers $\alpha,\beta$
if there exists abelian group $K$ with $\beta\leq{\rm card}(K)<\alpha$,
in group $G:=K\times\mathbb{R}$ consider two ideals
$\mathcal{I}:=\{A\subseteq G:{\rm card}(A)<\beta\}$ and
$\mathcal{J}:=\{A\subseteq G:{\rm card}(A)<\alpha\}$,
then $\mathcal{I}\subseteq \mathcal{J}$. Consider Fort space $F:={\mathbb R}\cup\{\infty\}$
with particular point $\infty$, in transformation group $(F,G)$ with
$\infty(k,r):=\infty$ and
$x(k,r):=x+r$
$(x\in\mathbb{R},(k,r)\in K\times{\mathbb R}$ we have
\\
$\bullet$ $xG=\mathbb{R}$ for all $x\in F\setminus\{\infty\}$,
\\
$\bullet$ $stab(x)=K\times\{0\}$ for all $x\in F\setminus\{\infty\}$.
\\
So by Theorem~\ref{salam60}, $(F,G)$ is Li--Yorke chaotic (modulo
$\mathcal{I}$) however it is not Li--Yorke chaotic
modulo $\mathcal{J}$.
\end{example}
\section*{Acknowledgment}
\noindent The authors would like to express their thanks to the referee 
for his/her useful comments.

\noindent {\small
{\bf Mehrnaz Pourattar},
 Department of Mathematics, Science and Research Branch, Islamic Azad University, Tehran,
Iran
({\it e-mail}: mpourattar@yahoo.com)
\\
{\bf Fatemah Ayatollah Zadeh Shirazi},
Faculty of Mathematics, Statistics and Computer Science,
College of Science, University of Tehran ,
Enghelab Ave., Tehran, Iran \linebreak
({\it e-mail}: fatemah@khayam.ut.ac.ir)}

\end{document}